\documentclass[11pt]{article}

\usepackage{latexsym,color,amsmath,amsthm,amssymb,amscd,amsfonts}

\usepackage{graphicx}
\usepackage{cancel}
\usepackage{stmaryrd}
\usepackage{commath}
\usepackage{mathtools}
\usepackage{amssymb}
\usepackage{enumerate}
\usepackage{bbm}
\usepackage{hyperref,mathtools}
\usepackage[title]{appendix}

\setlength{\topmargin}{-0.45 in}     
\setlength{\oddsidemargin}{0.3in}  
\setlength{\evensidemargin}{0.3in} 
\setlength{\textheight}{9in}
\setlength{\textwidth}{6.1in} 
\setlength{\footskip}{0.55in}  

\newtheorem{conj}{Conjecture}[section]
\newtheorem{theorem}[conj]{Theorem}

\newtheorem{remark}[conj]{Remark}
\newtheorem{lemma}[conj]{Lemma}
\newtheorem{proposition}[conj]{Proposition}

\newtheorem{defn}[conj]{Definition}

\newcommand\independent{\protect\mathpalette{\protect\independent}{\perp}} 
\def\independent#1#2{\mathrel{\rlap{$#1#2$}\mkern2mu{#1#2}}}

\newcommand\restr[2]{{
  \left.\kern-\nulldelimiterspace 
  #1 
  \littletaller 
  \right|_{#2} 
  }}

\newcommand{\littletaller}{\mathchoice{\vphantom{\big|}}{}{}{}}

\renewcommand{\P}{\mathbb{P}}

\DeclareMathOperator{\Var}{Var}

\newcommand{\Z}{\mathcal{Z}}




\renewcommand{\Z}{\mathbb{Z}}

\date{}

\author{Heshan Aravinda}

\title{Entropy-variance inequalities for discrete log-concave random variables via degree of freedom}

\begin{document}

\maketitle

\begin{abstract}
We utilize a discrete version of the notion of degree of freedom to prove a sharp min-entropy-variance inequality for integer valued log-concave random variables. More specifically, we show that the geometric distribution minimizes the min-entropy within the class of log-concave probability sequences with fixed variance. As an application, we obtain a discrete R\'{e}nyi entropy power inequality in the log-concave case, which improves a result of Bobkov, Marsiglietti, and Melbourne (2022).

\end{abstract}

\vskip5mm
\noindent
{\bf Keywords:}  Log-concave distributions, degree of freedom, extreme points, R\'{e}nyi entropy, entropy power inequalities. 

\section{Introduction}
\label{intro}
Recall an integer valued random variable $X$ is said to be log-concave if its probability mass function $p$ satisfies $$p^2(z)\geq p(z-1)p(z+1)\,\,\,\,\text{for all $z \in \mathbb{Z}$}$$ and the support of $X$ is contiguous. The log-concave assumption provides a broad, yet natural, convolution-stable class of distributions
on $\mathbb{Z}.$ Examples include Bernoulli, sums of independent Bernoulli, geometric, binomial, negative binomial and Poisson. We refer to \cite{SW,S,JG,NO,BJRP} for more details about discrete log-concavity. In recent years, there has been a great deal of interest in studying these distributions and producing results, in analogy with log-concave distributions in the continuous setting \cite{KL,JKM,MM2,MP}. The purpose of this article is to investigate information-theoretic properties of such random variables using a sophisticated localization-type argument. More specifically, we adapt the notion of degree of freedom to explore sharp entropy-variance inequalities in the log-concave setting.  \ \vskip1mm  

The notion of degree of freedom of a log-concave function was first introduced in \cite{FG} by Fradelizi and Gu\'{e}don. Therein, the authors used it to develop a localization technique, which provides a powerful tool for proving geometric and functional inequalities for log-concave measures in $\mathbb{R}^n$. A discrete analog of localization has been developed in \cite{MarMel}. Applications include dilation inequalities, a discrete Pr\'{e}kopa-Leindler, concentration for ultra log-concave distributions \cite{HesMarMel} and a proof of a  strengthened version of a conjecture of Feige \cite{AAMM}. Broadly speaking, these problems were treated as optimization problems with one constraint where the reduction of proofs to a certain type of distribution was possible due to the identification of extreme points and the use of the Krein-Milman theorem. However, one needs a more general approach to deal with problems involving more than one constraint. In such cases, discrete degree of freedom can be leveraged. In particular, this notion can be used to describe extreme points of a subset of integer valued log-concave probabilities satisfying more than one constraint. \ \vskip1mm

For completeness, let us first recall the notion of degree of freedom of a log-concave function on $\mathbb{R}$.

\begin{defn}[\cite{FG}]
    \label{dof_continuous}
    Let $U: \mathbb{R} \to \mathbb{R} \cup \{+\infty\}$ be convex, denote by $E$ the domain
of $U$ (i.e. $E = \{x \colon U(x)<+\infty\}$. The degree of freedom of $e^{-U}$ is the largest integer $k$ such that there exist $\alpha>0$ and linearly independent continuous functions $U_1,U_2,\dots,U_k$ defined on $E$ such that for all $\epsilon_1,\epsilon_2,\dots,\epsilon_k \in [-\alpha,\alpha]\,,$ the function   $e^{-U} (1 + \sum_{i=1}^k \epsilon_i U_i)$ is log-concave.
\end{defn}
The idea is to adapt this definition to log-concave functions on integers. Let us begin with the notion of discrete convexity. A function $V:\mathbb{Z} \to \mathbb{R} \cup \{+ \infty\}$ is said to be convex if $$\Delta^2 V(z):= V(z-1)- 2V(z)+V(z+1)\geq 0\,\,\,\,\text{for all $z \in \mathbb{Z}$.}$$

Equivalently, $V$ is convex on $\mathbb{Z}$ if and only if there exists a continuous and convex\footnote{Let $E \subseteq \mathbb{R}$ be convex. A function $U \colon E \to \mathbb{R}$ is convex iff for all $0\leq t\leq 1$ and $x,y \in E$, $U(tx + (1-t)y)\leq tU(x) + (1-t)U(y)$. }function $\Bar{V}$ such that $\Bar{V} = V$ on $\mathbb{Z}$. A function $V$ is concave on $\mathbb{Z}$ if and only if $-V$ is convex on $\mathbb{Z}$. A function is affine (discrete) if it is both discrete convex and concave. If the support of $V$ only contains two points, say $\{z-1,z\}$ for some $z \in \mathbb{Z}$, then $V$ is necessarily affine, since $\Bar{V}$ is affine on $[z-1,z]$. A function $f$ is said to be log-concave if $\log(f)$ is concave, or equivalently, if $f^2 (z)\geq f(z-1)f(z+1)$ for all $z \in \mathbb{Z}$. Similarly, one can define the log-convexity of functions defined on integers.\ \vskip1mm

We now define the notion of degree of freedom in the discrete setting. For our purpose, it suffices to consider compactly supported log-concave functions. Let $a,b \in \mathbb{Z}$ with $a<b$. Denote by $\llbracket a,b \rrbracket$, an interval of integer points from $a$ to $b$. $[a,b] \subset \mathbb{R}$ is the smallest closed convex set containing $\llbracket a,b \rrbracket$. Suppose $f$ is log-concave supported on $\llbracket a,b \rrbracket$, i.e. $f$ is of the form $e^{-V}$, where $V$ is convex and $V(z)< \infty$ on $\llbracket a,b \rrbracket$. 
 \begin{defn}
 \label{deg. of freedom}
The degree of freedom of $e^{-V}$ is the largest $k$ such that there exist $\alpha>0$ and linear independent functions $W_1,W_2,\dots,W_k$ defined on $\llbracket a,b \rrbracket$ such that for all $\epsilon_1,\epsilon_2,\dots,\epsilon_k \in [-\alpha,\alpha]\,,$ the function   $e^{-V} (1 + \sum_{i=1}^k \epsilon_i W_i)$ is log-concave on $\llbracket a,b \rrbracket$.
 \end{defn}
 Our first main result (see also \cite{NS}\footnote{In their recent work, Jakimiuk et al. have independently developed results that overlap with Theorem 1.3.}) is as follows. One may view this as a discrete analog of \cite[Proposition 2]{FG}, where the authors have established a necessary and sufficient condition for the finiteness of degree of freedom of a continuous log-concave function.
\begin{theorem}
\label{necessary condition}
If the degree of freedom of $e^{-V}$ is $k+1$, then there exist $k$ number of affine functions $\psi_1, \psi_2,\dots,\psi_k$ on $\llbracket a,b \rrbracket$ such that $V = \displaystyle \max_{i=1}^k \psi_i.$
\end{theorem}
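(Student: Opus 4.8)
The plan is to reparametrize the perturbations and identify the degree of freedom with the dimension of an explicit linear space cut out by the \emph{tight} log-concavity constraints. Since $f = e^{-V} > 0$ on $\llbracket a,b\rrbracket$, writing $n = b-a+1$, the map $W \mapsto fW$ is a linear isomorphism, so perturbing $f(1 + \sum_i \epsilon_i W_i)$ is the same as perturbing the values $g_z := f_z(1 + W_z)$ with $W_z := \sum_i \epsilon_i W_i(z)$. Log-concavity of $g$ on $\llbracket a,b\rrbracket$ reduces to the $n-2$ interior inequalities $g_z^2 \ge g_{z-1}g_{z+1}$ for $z \in \{a+1,\dots,b-1\}$: the boundary inequalities hold automatically since $g$ vanishes off the support, and $g > 0$ on $\llbracket a,b\rrbracket$ for $\alpha$ small. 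I would split these constraints according to the base function, calling $z$ \emph{tight} if $\Delta^2 V(z) = 0$ (equivalently $f_z^2 = f_{z-1}f_{z+1}$) and \emph{strict} otherwise, and let $T$ denote the set of tight interior points.

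First I would analyze a single tight point $z$. Using $f_z^2 = f_{z-1}f_{z+1}$, a direct expansion gives $g_z^2 - g_{z-1}g_{z+1} = f_z^2\bigl(-\Delta^2 W(z) + W_z^2 - W_{z-1}W_{z+1}\bigr)$, so the constraint has linear part $-\Delta^2 W(z)$ and quadratic part $W_z^2 - W_{z-1}W_{z+1}$. The key observation is that, because the admissible $\epsilon$'s form a symmetric box $[-\alpha,\alpha]^k$, testing a coordinate direction $\epsilon = t e_\ell$ with $t \to 0^{\pm}$ forces the linear term to vanish: if $\Delta^2 W_\ell(z) \neq 0$, one sign of small $t$ makes the expression negative since the linear term dominates the quadratic one. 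Hence necessarily $\Delta^2 W_\ell(z) = 0$ for every $\ell$ and every tight $z$, i.e.\ $S := \mathrm{span}(W_1,\dots,W_k)$ lies in $\mathcal{K} := \{W : \Delta^2 W(z) = 0 \text{ for all } z \in T\}$. Conversely, if $\Delta^2 W(z) = 0$ at a tight point then $W_z = (W_{z-1}+W_{z+1})/2$ and the quadratic term equals $(W_{z-1}-W_{z+1})^2/4 \ge 0$, so tight constraints are preserved for all $\epsilon$, while the finitely many strict constraints survive for $\alpha$ small by continuity. Thus $\mathcal{K}$ is itself an admissible perturbation space, and the degree of freedom equals $\dim \mathcal{K}$.

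It remains to compute $\dim \mathcal{K}$ and translate it. The second-difference operator $W \mapsto (\Delta^2 W(z))_{z=a+1}^{b-1}$ maps $\mathbb{R}^n$ onto $\mathbb{R}^{n-2}$ with kernel the $2$-dimensional space of affine functions, so its restriction to the coordinates in $T$ remains surjective and $\dim \mathcal{K} = n - |T|$. Writing $j$ for the number of interior points where $\Delta^2 V(z) > 0$, we have $|T| = (n-2) - j$, and therefore the degree of freedom equals $n - |T| = j + 2$. Setting this equal to $k+1$ forces $j = k-1$: the convex function $V$ has exactly $k-1$ interior points of strict convexity, hence is piecewise affine with $k$ linear pieces; each piece extends to an affine function on $\llbracket a,b\rrbracket$ lying below $V$ elsewhere, so taking their maximum recovers $V$, giving $V = \max_{i=1}^k \psi_i$ as claimed.

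I expect the main obstacle to be the necessity step at tight points, namely rigorously ruling out that the quadratic term compensates a nonvanishing linear term, together with the bookkeeping that the second-difference constraints indexed by $T$ are genuinely independent; this independence is exactly what pins the degree of freedom to the precise value $j+2$ rather than merely bounding it.
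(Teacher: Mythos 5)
Your proposal is correct, and it takes a genuinely different route from the paper. The paper proves Theorem \ref{necessary condition} by first establishing Lemma \ref{discrete and continuous deg. of freedom} (the degree of freedom of $e^{-V}$ equals that of $e^{-\bar V}$, $\bar V$ the piecewise-linear extension) --- with an explicit construction of $k+1$ admissible perturbations borrowed from \cite{FG} and a linear-system verification of their independence, plus a dimension count of the space of piecewise-affine functions for the upper bound --- and then simply restricts the continuous characterization of \cite[Proposition 2]{FG} to $\llbracket a,b\rrbracket$. You instead stay entirely in the discrete setting and compute the degree of freedom exactly: splitting the interior log-concavity constraints into tight ($\Delta^2 V(z)=0$) and strict ones, your $t\to 0^\pm$ limiting argument at tight points is a localized version of the paper's \textbf{Fact I} (which treats only the globally affine case), and it pins the admissible span inside $\mathcal{K}=\{W:\Delta^2 W(z)=0 \text{ on } T\}$; your converse verification via the identity $W_z^2 - W_{z-1}W_{z+1} = (W_{z-1}-W_{z+1})^2/4$ at tight points, together with surjectivity of the second-difference map restricted to the $T$-coordinates, gives $\mathrm{dof} = n - |T| = j+2$, with $j$ the number of strictly convex interior points. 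This is sound: the rank count is correct (coordinate projections of surjective maps are surjective), and $j = k-1$ strict points yield exactly $k$ affine blocks whose extensions, by monotonicity of slopes, realize $V$ as their maximum. Your argument is in fact stronger than the theorem as stated --- it computes the degree of freedom exactly, effectively recovering Lemma \ref{discrete and continuous deg. of freedom} combined with \cite[Proposition 2]{FG} (since $\bar V$ has $j+1$ affine pieces), and it sidesteps both the explicit $\bar W_i$ construction and the $L_i/M_i$ elimination in the paper. What the paper's route buys is the bridge between discrete and continuous degrees of freedom, which is of independent interest (e.g., it is how the paper computes the degree of freedom in the introductory example $V=\{3,2,\tfrac43,2\}$, where your formula gives the same answer $j+2=4$); note also that your count correctly explains why the full converse of \cite[Proposition 2]{FG} fails discretely, since the minimal number of affine functions in a max-representation of $V$ on $\mathbb{Z}$ can be smaller than $j+1$.
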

The proof of the theorem relies on the continuous case. In fact, we prove a stronger result. Namely, we will show that the degrees of freedom of $e^{-V}$ and its continuous linear extension are equal (Lemma 2.1). This result allows one to determine the degree of freedom of a given discrete log-concave function, whereas Theorem \ref{necessary condition} only provides a necessary condition for degree of freedom to be finite. \ \vskip1mm 
Let us recall \cite[Proposition 2]{FG}, which states that the degree of freedom of a continuous log-concave function $e^{-U}$ is $k+1$ if and only if there exist $k$ (but not less than $k$) continuous affine functions $\phi_1,\phi_2,\dots,\phi_k$ such that $U = \max\{\phi_1, \phi_2,\dots,\phi_k\}$. In general, this is false in the discrete setting, as can be seen by taking $f=e^{-V}$, where $V = \{3,2, \frac{4}{3}, 2\}$ defined on $\llbracket 0,3 \rrbracket$. The log-concavity of $f$ follows directly from the definition. In order to determine the degree of freedom, let us first construct $\bar{V}$ by extending $V$ linearly on each sub-interval $[z-1,z]$ for $z=1,2,3$. $\bar{V}$ is as follows.
\[\bar{V}(x) = \begin{cases} 
      3-x  &  x \in [0,1] \\
      
     \frac{8}{3}  -\frac{2}{3}x &  x \in [1,2]\\
      
      \frac{2}{3}x  &  x \in [2,3]
   \end{cases}
\] 
Clearly, $\bar{V}$ is convex on $[0,3]$. It follows from \cite[Proposition 2]{FG} that the degree of freedom of $e^{-\bar{V}}$ is $4$. By Lemma \ref{discrete and continuous deg. of freedom} (to be proved), $e^{-V}$ has $4$ degrees of freedom. However, $V$ can be written as the maximum of $\psi_1(z) = 3-z$ and $\psi_2(z) = \frac{2}{3}z$ defined on $\llbracket 0,3 \rrbracket$.\ \vskip1mm
As mentioned, the main goal of this paper is to investigate the relationship between entropy and variance within the class of discrete log-concave probabilities. In particular, it is interesting to understand entropy minimizers among these random variables when variance is fixed. Such problems have been considered in the continuous setting \cite{MNT,BN,BM,MK}. However, the work in the discrete setting is limited. First, let us recall the following: given an integer valued random variable $X$ with the mass function $p$,  the R\'{e}nyi entropy of order $\alpha>0$, $\alpha \neq 1$, is defined by,  $$ H_{\alpha}(X) =\dfrac{-1}{\alpha-1}\log \displaystyle \sum_{z \in \mathbb{Z}}\,p(z)^{\alpha}.$$ If $ \alpha \to \infty$, one obtains the min-entropy $H_{\infty}(X) = - \log \max_z\, p(z)$. The limiting case $\alpha \to 1$ recovers the Shannon entropy $H(X) =  H_1(X) = - \sum_{z \in \mathbb{Z}} p(z)\log p(z)$. The R\'{e}nyi entropy power of order $\alpha$ is, $$N_{\alpha}(X) = e^{2H_{\alpha}(X)} = \left( \displaystyle \sum_{z \in \mathbb{Z}} p(z)^{\alpha}\right)^{\frac{-2}{\alpha-1}}.$$
Similarly, one can define the R\'{e}nyi entropy for absolutely continuous random variables with densities with respect to the Lebesgue measure.\
\vskip1mm
Entropy inequalities are a central topic of study in information theory. They have found striking applications in combinatorics \cite{K,MMT,B}. We refer the reader to \cite{MT,J,G} for recent works on discrete R\'{e}nyi entropy inequalities for log-concave random variables and their variants.\ \vskip1mm
Our next main result is motivated by the work of Bobkov-Marsiglietti-Melbourne \cite{BobMarMel}, in which they have utilized a majorization technique to establish variance bounds for min-entropy. Namely, it has been shown that $N_{\infty}(X) \geq \frac{1}{4} + \mathrm{Var}(X)$ for $X$- discrete log concave (see \cite[Theorem 1.1]{BobMarMel}). We will show the following:  
\begin{theorem}
\label{min-entropy}
If $X$ is a discrete log-concave random variable, then \begin{equation}
\label{b1}
 N_{\infty}(X) \geq 1 + \mathrm{Var}(X).
\end{equation}
This bound is asymptotically attained for geometric distributions.
\end{theorem}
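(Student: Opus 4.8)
The plan is to recast the inequality in terms of the maximal mass and then extremize the variance via the discrete degree of freedom. Writing $M=\max_z p(z)$, we have $N_\infty(X)=e^{2H_\infty(X)}=M^{-2}$, so \eqref{b1} is equivalent to the bound $\mathrm{Var}(X)\le M^{-2}-1$. Since both sides depend only on $M$ and the variance, it suffices to prove the sharp statement that, among all log-concave distributions with prescribed maximal mass $M$, the variance is largest for the one-sided geometric law, for which $\mathrm{Var}=\frac{1-M}{M^2}$. The desired inequality then follows from the elementary estimate $\frac{1-M}{M^2}\le\frac{1-M^2}{M^2}=M^{-2}-1$, which is valid because $M\in(0,1]$ forces $M^2\le M$. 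Asymptotic sharpness is checked directly on $p(z)=(1-q)q^z$ by letting $q\to1$, since there $N_\infty(X)/(1+\mathrm{Var}(X))=\frac{1}{1-q+q^2}\to1$.

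To locate the variance maximizer I would set up a linear optimization amenable to Theorem \ref{necessary condition}. Because the variance is translation invariant, I may fix the mode at $0$ and, for a fixed value $\mu$ of the mean, maximize the linear functional $p\mapsto\sum_z z^2 p(z)$ (which equals the variance up to the additive constant $\mu^2$) over log-concave $p$ supported on some $\llbracket a,b\rrbracket$, subject to the three linear constraints $\sum_z p(z)=1$, $\sum_z z\,p(z)=\mu$, and $p(0)=M=\max_z p(z)$; the mean $\mu$ is then optimized out at the very end. The key point, in the spirit of the localization described in the introduction, is that although the class of log-concave sequences is not convex, a maximizer may be taken at an \emph{extreme} configuration, that is, one admitting no constraint-preserving log-concave perturbation in the sense of Definition \ref{deg. of freedom}. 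Such a configuration must then have degree of freedom at most the number of active constraints, since each linear constraint removes at most one admissible perturbation direction.

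By Lemma \ref{discrete and continuous deg. of freedom} and Theorem \ref{necessary condition}, an extreme configuration with degree of freedom at most $3$ is of the form $e^{-V}$ with $V=\max\{\psi_1,\psi_2\}$ a maximum of two affine functions; that is, a two-sided geometric law $p(z)=M a^{z}$ for $z\ge0$ and $p(z)=M b^{-z}$ for $z\le0$, allowing the degenerate one-sided case $b\to0$. It then remains to maximize the variance over this two-parameter family under the normalization constraint, and a direct computation shows the maximum occurs in the degenerate one-sided limit, yielding $\mathrm{Var}\le\frac{1-M}{M^2}$ as claimed; the proof concludes as in the first paragraph. The main obstacle is the rigorous degree-of-freedom reduction in the presence of several simultaneous constraints: one must justify that the supremum of the variance is attained, using a compactness argument that controls the support through log-concavity and the fixed value of $M$; one must count the active constraints correctly so that Theorem \ref{necessary condition} forces at most two affine pieces; and one must pass from the compactly supported framework of Definition \ref{deg. of freedom} and Theorem \ref{necessary condition} to the genuinely one-sided geometric limit, which has infinite support. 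The remaining optimization over the two rates $a$ and $b$ is a routine but careful calculus exercise.
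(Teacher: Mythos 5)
Your skeleton is the same as the paper's (rewrite \eqref{b1} as $M^2(X)(1+\mathrm{Var}(X))\le 1$, reduce to extreme points, cap the degree of freedom at $3$ by a perturbation argument, invoke Theorem \ref{necessary condition} to get extremals with $V$ a maximum of two affine functions, then verify the inequality on that family), but your constraint set breaks the reduction step. You fix $p(0)=M=\max_z p(z)$: the equality $p(0)=M$ is a linear constraint, but ``$0$ is the mode'' is the additional inequality family $p(\pm 1)\le p(0)$, and the extremality contradiction needs \emph{both} perturbed mass functions $e^{-V}\bigl(1\pm\sum_i\epsilon_i W_i\bigr)$ to remain in the feasible set. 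If $p(1)=p(0)$ (or $p(-1)=p(0)$) is active, one of the two perturbations can push $p(1)$ above $p(0)$, so the midpoint argument fails; the honest fix is to add each active inequality as an extra equality, which raises the admissible degree of freedom to $4$ or $5$ and, via Theorem \ref{necessary condition}, yields extremals with up to three or four affine pieces (flat-topped potentials) rather than your claimed two. Your heuristic ``each linear constraint removes at most one perturbation direction'' papers over exactly this point. The paper avoids the issue by design: it does not constrain $M$ at all, but instead imposes the moment \emph{inequality} constraints $\mathbb{E}[h_1(X)]\ge 0$, $\mathbb{E}[h_2(X)]\ge 0$ and maximizes the convex functional $\Phi(\mathbb{P}_X)=\sup_n\mathbb{P}(X=n)$ over $\mathcal{P}_h(\llbracket 0,L\rrbracket)$, so the line of perturbations, chosen to satisfy the three \emph{equality} versions, automatically stays feasible on both sides.

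The second gap is the endgame, which is where the paper does its real work. The compactly supported framework of Definition \ref{deg. of freedom} produces \emph{truncated} two-piece geometrics, a four-parameter family $(p_1,p_2,N,K)$ (the paper's CASE II), together with monotone one-piece distributions, not the untruncated two-parameter family $p(z)=Ma^{z}$, $p(-z)=Mb^{z}$ that you propose to optimize over. Your target $\mathrm{Var}(X)\le (1-M)/M^2$ is in fact \emph{stronger} than (\ref{M-fun}) (it gives $M^2(1+\mathrm{Var}(X))\le 1-M+M^2\le 1$, exactly matching the geometric ratio $1-\theta+\theta^2$), and it looks numerically plausible, but calling its verification ``a routine but careful calculus exercise'' badly underestimates it: even the weaker inequality (\ref{M-fun}) on the truncated family is the polynomial inequality (\ref{E4}), whose proof occupies all of Section \ref{S4} plus the appendix, and Remark 4.1 shows the cross term $\bigl(\sum_{n=1}^K n/p_2^n-\sum_{n=1}^N n/p_1^n\bigr)^2$ cannot be discarded, so genuine cancellations must be tracked. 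Likewise, the monotone extremals are not a formal $b\to 0$ limit in the paper: they are handled by a separate convex-order majorization against the geometric with the \emph{same mean} (\cite[Theorem 2.6]{MM2}), plus a two-crossing argument to compare the maxima. Your opening reformulation, the elementary estimate $(1-M)/M^2\le M^{-2}-1$, and the sharpness check on the geometric are all correct, but as written the proposal leaves unproved precisely the two steps that constitute the paper's proof.
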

For a geometric distribution with the probability mass function $p(z) = \theta(1-\theta)^z,\, z \in \{0,1,2,\dots\}$, we have \begin{equation*}
\frac{1 + \mathrm{Var}(X)}{N_{\infty}(X)} = 1-\theta+\theta^2,
\end{equation*}
which goes to $1$ as $\theta \to 0$ or $\theta \to 1.$\ \vskip1mm
Since $\alpha \to N_{\alpha}(X)$ is non-increasing, one may obtain an inequality for the Shannon entropy power, i.e. $N(X)\geq 1 + \mathrm{Var}(X)$. However, this inequality may be improved. It would be interesting to determine the optimal value of $c$ for which $N(X)\geq 1 + c\mathrm{Var}(X)$ holds.  \ \vskip1mm
In the Euclidean setting, the entropy power inequality (EPI) states that the entropy power of the sum of independent random vectors is at least the sum of their entropy powers; equality holds if and only if the random vectors are Gaussian with proportional covariance matrices. A more general inequality also holds for the R\'{e}nyi entropy of order $\alpha$ up to some $\alpha$ - dependent factor \cite{BC}. In general, the discrete analog of the EPI does not hold. Nonetheless, there have been numerous attempts to formulate  discrete versions of the EPI. See the recent works in this direction \cite{HAT,HV,MWJ}.
\vskip1mm
Theorem \ref{min-entropy} implies a discrete entropy power inequality for log-concave random variables, providing a partial answer to a question left open in \cite{BobMarMel}. As mentioned therein, we consider the quantity $\Delta_{\alpha}(X) = N_{\alpha}(X)-1$ to be the discrete analog of the usual R\'{e}nyi entropy power. 
\begin{theorem}
\label{EPI}
Let $S_n = \sum_{i=1}^n X_i$, where $\{X_i\}_{1\leq i \leq n}$ is a collection of independent discrete log-concave random variables. Then,
\begin{equation}
\label{bo1}
    \Delta_{\alpha} (S_n) \geq c_{\alpha} \displaystyle \sum_{i=1}^n \Delta_{\alpha} (X_i)
\end{equation}
for $\alpha>1$ with $c_{\alpha} = \dfrac{\alpha -1}{{4(3 \alpha -1)}}.$
\end{theorem}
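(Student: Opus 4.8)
The plan is to split \eqref{bo1} into a lower bound for the left-hand side and a per-summand upper bound for the right-hand side. Since discrete log-concavity is preserved under convolution, $S_n$ is again log-concave and Theorem \ref{min-entropy} applies to it; combining this with the monotonicity of $\alpha \mapsto N_\alpha$ and with independence gives, for every $\alpha>1$,
\[
\Delta_\alpha(S_n) = N_\alpha(S_n) - 1 \ge N_\infty(S_n) - 1 \ge \mathrm{Var}(S_n) = \sum_{i=1}^n \mathrm{Var}(X_i).
\]
Hence it suffices to prove the single-variable estimate $\mathrm{Var}(X) \ge c_\alpha\,\Delta_\alpha(X)$ for each discrete log-concave $X$, as summation over $i$ then yields \eqref{bo1}. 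Observing that $c_\alpha^{-1} = \tfrac{4(3\alpha-1)}{\alpha-1} = 12 + \tfrac{8}{\alpha-1}$, this is exactly the reverse entropy-variance bound $N_\alpha(X) - 1 \le \big(12 + \tfrac{8}{\alpha-1}\big)\mathrm{Var}(X)$.

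To establish this reverse bound I would run the degree-of-freedom localization that drives the paper. After normalizing the mean to zero, fix the value of $\sum_z p(z)^\alpha$ (hence of $N_\alpha$) and minimize the variance; since the mean is fixed, this amounts to minimizing the \emph{linear} functional $p \mapsto \sum_z z^2 p(z)$ over log-concave sequences subject to $\sum_z p = 1$, $\sum_z z\,p = 0$ and $\sum_z p^\alpha = \mathrm{const}$. The point of the degree-of-freedom formalism (Definition \ref{deg. of freedom}) is that these finitely many constraints, together with the linear objective, bound the degree of freedom of any extremizer, so that by Theorem \ref{necessary condition} one has $-\log p = \max_i \psi_i$ for only a few affine functions $\psi_i$. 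In other words the extremal problem localizes to log-affine (geometric/uniform-type) sequences supported on an interval with just a few slope changes.

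It then remains to verify $N_\alpha(X) - 1 \le \big(12 + \tfrac{8}{\alpha-1}\big)\mathrm{Var}(X)$ on this reduced family, where both $\sum_z p(z)^\alpha$ and $\mathrm{Var}(X)$ are explicit geometric-series expressions in the slopes and the support length. The two pieces of the constant are transparent in the extremal limits: the uniform law has $N_\beta(X)$ equal to the squared support size for every $\beta$, with $\mathrm{Var}(X) = (N_\infty(X)-1)/12$, which accounts for the $12$, while the term $\tfrac{8}{\alpha-1}$ absorbs the excess $N_\alpha(X) - N_\infty(X)$ produced by the geometric tails of the extremal sequences. I expect this final step to be the main obstacle: one must control the finitely many affine pieces permitted at an extreme point and then maximize the ratio $(N_\alpha(X)-1)/\mathrm{Var}(X)$ across the parameter ranges and their degenerations (support length $\to\infty$, slopes $\to 0$ or $\to\infty$), confirming that the worst case never exceeds the stated constant, which need not be sharp. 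By contrast, the reduction carried out in the first paragraph is routine.
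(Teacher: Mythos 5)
Your opening paragraph is exactly the paper's proof of the reduction: log-concavity is stable under convolution, so Theorem \ref{min-entropy} applied to $S_n$, together with the monotonicity of $\alpha \mapsto N_\alpha$ and independence, gives $\Delta_\alpha(S_n) \geq \sum_{i=1}^n \mathrm{Var}(X_i)$, and the theorem follows once one knows $\mathrm{Var}(X) \geq c_\alpha \Delta_\alpha(X)$ for each summand. But at this point the paper simply cites Proposition \ref{entropy bounds} (from Bobkov--Marsiglietti--Melbourne): the bound $N_\alpha(X) \leq 1 + \frac{4(3\alpha-1)}{\alpha-1}\mathrm{Var}(X)$, i.e.\ exactly your $N_\alpha(X)-1 \leq \bigl(12 + \frac{8}{\alpha-1}\bigr)\mathrm{Var}(X)$, holds for \emph{every} integer-valued random variable with finite variance, with no log-concavity assumption. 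So the half of the argument you defer to a new extremal analysis, and yourself flag as the main obstacle, is a known quoted result, and the genuine gap in your proposal is that this crucial half is never carried out.

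Moreover, the localization you sketch for it would not go through as described. The paper's degree-of-freedom machinery (Step II of the proof of Theorem \ref{min-entropy}) hinges on the constraints being \emph{linear} in the probability sequence $p$: only then do the admissibility conditions on a perturbation $e^{-V}\bigl(1+\sum_i \epsilon_i W_i\bigr)$ form a homogeneous linear system in $(\epsilon_i)$, so that $(\epsilon_i)$ and $-(\epsilon_i)$ are simultaneously admissible and yield the decomposition $\mathbb{P}_X = \frac{1}{2}(\mathbb{P}_{X_1}+\mathbb{P}_{X_2})$ contradicting extremality. Your constraint $\sum_z p(z)^\alpha = \mathrm{const}$ is not linear; since $p \mapsto \sum_z p(z)^\alpha$ is convex for $\alpha>1$, at a point where this constraint is active at least one of the two opposite perturbations exits the admissible set, and extreme points may fill an entire curved face of the constraint body (as for a Euclidean ball, where every boundary point is extreme). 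Hence no bound on the degrees of freedom of an extremizer follows, and Theorem \ref{necessary condition} gives no reduction to finitely many affine pieces. Put differently, the orientation is wrong for Krein--Milman: to bound $N_\alpha$ above at fixed variance you must \emph{minimize} the convex functional $\sum_z p(z)^\alpha$ (recall $N_\alpha$ is decreasing in it for $\alpha>1$), whereas extreme-point localization captures maxima of convex functionals, not minima. The correct fix is simply to replace your second and third paragraphs by an appeal to Proposition \ref{entropy bounds}; with that substitution your argument coincides with the paper's proof.
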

In the regime $1< \alpha \leq 2$, the constant $c_{\alpha}$ can be improved to $\frac{\alpha-1}{3 \alpha-1}$ \cite{BobMarMel}. It has also been shown that the inequality (\ref{bo1}) holds for the Poisson-Bernoulli random variables and $\alpha \geq 2$ with $c_{\alpha} = \frac{\alpha}{6 (\alpha-1)}$ \cite{MMR}. 
 \vskip1mm
Let us outline the paper. In Section \ref{S2}, we prove a key lemma that unveils the relationship between the degrees of freedom of a compactly supported discrete log-concave function and its continuous counterpart. As an immediate consequence, we get Theorem \ref{necessary condition}. Section \ref{S3} contains entropy inequalities. We show that Theorem \ref{min-entropy} can be reduced to a certain class of random variables, which we identify using Theorem \ref{necessary condition}. It turns out their mass functions are necessarily log-concave with `V-shaped' potentials. The proof of the inequality for such distributions is included in Section \ref{S4}.
\section*{Acknowledgements} I would like to thank Arnaud Marsiglietti for numerous helpful discussions. I would also like to thank the referees for
their valuable comments.
\section{Degree of freedom in the discrete setting}
\label{S2}
We start with the following lemma.
\begin{lemma} 
\label{discrete and continuous deg. of freedom} 
Let $a,b \in \Z$ such that $a \leq b$. Let $V \colon \llbracket a,b \rrbracket \to \mathbb{R}$ be convex. Then, 
\begin{center}
    \text{Degree of freedom of $e^{-V}=$} \text{Degree of freedom of $e^{-\Bar{V}}$,}
\end{center}
where $\bar{V}$ is the continuous function obtained by extending $V$ linearly on each sub-interval $[z-1,z] \subset [a,b]$  , $z \in \Z$.
\end{lemma}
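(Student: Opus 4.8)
The plan is to compute the discrete degree of freedom of $e^{-V}$ explicitly, in terms of the second differences of $V$, and then match it against the continuous degree of freedom of $e^{-\bar V}$ supplied by \cite[Proposition 2]{FG}. Throughout I would write $g=e^{-V}$ and, for a perturbation vector $\epsilon=(\epsilon_1,\dots,\epsilon_k)$, put $\phi_\epsilon=1+\sum_{i=1}^k\epsilon_i W_i$. For $\alpha$ small enough $\phi_\epsilon>0$ on $\llbracket a,b\rrbracket$, so the support and its contiguity are preserved and the only conditions for $g\phi_\epsilon$ to be log-concave are the three-point inequalities at the interior points. Taking logarithms, these read $\Delta^2\log\phi_\epsilon(z)\leq \Delta^2 V(z)$ for every $z\in\llbracket a+1,b-1\rrbracket$, and this family of inequalities is the object I would analyze.

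The decisive step is to separate the interior points according to whether the convexity of $V$ is strict. At a \emph{non-binding} point, i.e. one with $\Delta^2 V(z)>0$, the right-hand side is bounded below by a fixed positive constant, while $\Delta^2\log\phi_\epsilon(z)=O(\alpha)$ uniformly in $z$ for fixed $W_i$; hence, after shrinking $\alpha$, such points impose no restriction. At a \emph{binding} point, i.e. one with $\Delta^2 V(z)=0$, the constraint becomes $\Delta^2\log\phi_\epsilon(z)\leq 0$, and I would expand $\log\phi_\epsilon=\sum_i\epsilon_i W_i-\tfrac12(\sum_i\epsilon_i W_i)^2+\cdots$ to extract the leading term $\sum_i\epsilon_i\,\Delta^2 W_i(z)$. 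Since the constraint must hold for all $\epsilon$ in the symmetric box $[-\alpha,\alpha]^k$ and the linear term dominates as $\epsilon\to 0$, the linear form $\epsilon\mapsto\sum_i\epsilon_i\,\Delta^2 W_i(z)$ must vanish identically, forcing $\Delta^2 W_i(z)=0$ for every $i$; conversely, if each $W_i$ is affine across every binding point, then $\phi_\epsilon$ is affine, hence log-concave, across those points, so the constraints hold exactly. This identifies the admissible perturbation directions as exactly the space $\mathcal W=\{W:\llbracket a,b\rrbracket\to\mathbb{R}\mid \Delta^2 W(z)=0 \text{ for every binding } z\}$, whence the discrete degree of freedom equals $\dim\mathcal W$.

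To finish, let $j$ denote the number of interior points at which $\Delta^2 V>0$. Viewing $\Delta^2$ as the second-difference operator from functions on $\llbracket a,b\rrbracket$ onto functions on the interior points, which is surjective with $2$-dimensional kernel (the affine functions), I would compute $\dim\mathcal W=2+j$, since requiring $\Delta^2 W$ to vanish on the binding set leaves exactly $j$ free coordinates in the image. On the other hand, $\bar V$ changes slope precisely at the $j$ non-binding integers, so it is convex and piecewise linear with $j+1$ affine pieces of pairwise distinct slopes; by \cite[Proposition 2]{FG} this means $e^{-\bar V}$ has degree of freedom $(j+1)+1=j+2$. Comparing the two counts gives the claimed equality. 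The main obstacle, and the part demanding the most care, is the local analysis at the binding points: justifying rigorously that the symmetric range of $\epsilon$ together with the freedom to choose $\alpha$ forces the vanishing of the entire linear form (rather than merely a sign condition), and verifying that positivity of $\phi_\epsilon$ and the strict non-binding inequalities can be secured simultaneously by a single small $\alpha$.
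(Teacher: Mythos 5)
Your proposal is correct, and it reaches the lemma by a genuinely different route than the paper. The paper argues in two separate halves: for the lower bound it transplants the Fradelizi--Gu\'edon construction, restricting the truncated continuous functions $\bar W_i$ to $\llbracket a,b\rrbracket$ and then verifying linear independence by an explicit computation at the breakpoints $z_k, z_{k-1},\dots$; for the upper bound it uses Fact I (admissible perturbations are affine wherever $V$ is affine) together with a dimension count of the space $C$ of functions affine on each $\llbracket z_{i-1},z_i\rrbracket$. You instead compute the discrete degree of freedom intrinsically, never transferring perturbations between the two settings: your necessity step (the $\pm\epsilon$ linearization at binding points forcing $\Delta^2 W_i(z)=0$) is exactly the paper's Fact I recast pointwise, and your space $\mathcal W$ coincides with the paper's $C$, with $\dim\mathcal W = 2+j$ matching $\dim C = k+1$ via $k=j+1$; but your sufficiency step is new and replaces the FG transplantation entirely --- the observation that $\Delta^2\phi_\epsilon(z)=0$ together with $\phi_\epsilon>0$ yields the three-point inequality at $z$ by AM--GM, combined with uniform $O(\alpha)$ control at the finitely many non-binding points, shows that \emph{any} basis of $\mathcal W$ works, which trivializes the linear-independence verification that occupies much of the paper's proof. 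The two concerns you flag are both handled by what you already wrote: the symmetric box $[-\alpha,\alpha]^k$ does force the full vanishing of the linear form (set $\epsilon_j=0$ for $j\neq i$ and let $\epsilon_i=\pm t\to 0$, exactly as in Fact I), and a single small $\alpha$ secures positivity and the non-binding constraints simultaneously because there are only finitely many points and $\min_{\text{non-binding}}\Delta^2V(z)>0$. What each approach buys: yours is self-contained on the discrete side, uses \cite[Proposition 2]{FG} only as a black-box count for $e^{-\bar V}$ (noting that $\bar V$ has slope breaks precisely at the $j$ points with $\Delta^2V(z)>0$), and yields the clean intrinsic formula that the degree of freedom equals $2+\#\{z\colon \Delta^2V(z)>0\}$; the paper's route makes the correspondence between discrete and continuous perturbations explicit, which is in the spirit of its localization program. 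One small housekeeping item: your interior-point analysis presupposes $b-a\geq 2$, so you should dispose of the degenerate cases $b-a\leq 1$ separately (as the paper does via Fact II and the Remark); your dimension count still gives the right answer $2$ when $b-a=1$, but the single-point case needs the paper's convention.
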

Note that $\Bar{V}$ in Lemma \ref{discrete and continuous deg. of freedom} is convex on $[a,b].$ Indeed, since $\Delta^2 V(z)\geq 0$ on $\llbracket a,b \rrbracket$ and  $\Bar{V}^\prime(x)= V(z)- V(z-1)$ on each $(z-1,z)$, i.e. $\Bar{V}^\prime$ is non-decreasing. Therefore, the degree of freedom of $e^{-\Bar{V}}$ is defined. In fact, it is finite, whereas the degree of freedom of a compactly supported continuous log-concave function can be infinite, as can be seen by taking $g(x) =e^{-x^2}$ on $[-1,1]$. Let us now give a proof of Theorem \ref{necessary condition}.

\begin{proof}[Proof of Theorem \ref{necessary condition}] Let $k+1$ be the degree of freedom of $e^{-V}$. Equivalently, by Lemma 2.1, $e^{-\Bar{V}}$ has $k+1$ degrees of freedom. By \cite[Proposition 2]{FG}, there exist $k$ affine functions $\Bar{\psi_1},\Bar{\psi_2},\dots,\Bar{\psi_k}$ on $[a,b]$ such that $\Bar{V} = \max\{ \Bar{\psi_1}, \Bar{\psi_2},\dots,\Bar{\psi_k}\}$. Restricting $\Bar{V}$ to $\llbracket a,b \rrbracket$, we get $V = \max\{\psi_1,\psi_2,\dots,\psi_k\}$, where $\psi_i = $
$\restr{\Bar{\psi_i}}{ \llbracket a,b \rrbracket }$, the restriction of each $\Bar{\psi_i}$ to $ \llbracket a,b \rrbracket$. 
\end{proof}
It remains to prove Lemma \ref{discrete and continuous deg. of freedom}. As in the continuous case, we begin with the following important fact. \vskip2mm 
\noindent
\textbf{FACT I:} Suppose $\alpha,k$ and $W_1,W_2,\dots,W_k$ satisfy Definition \ref{deg. of freedom}. If $V$ is affine on $\llbracket a,b \rrbracket$, then each $W_i$ is affine on $\llbracket a,b \rrbracket$. \ \vskip1mm
We simply adapt the argument used in \cite{FG} to our setting. The result is trivial if $b-a=1$. Let us assume that $b-a\geq 2$. Fix $i \in \{1, \dots, k\}$ and let $\epsilon_j=0$ for all $j \neq i$ in Definition \ref{deg. of freedom}. Then, for all $\epsilon \in [-\alpha,\alpha]$, the function $e^{-V} (1 + \epsilon W_i)$ is log-concave on $\llbracket a,b \rrbracket$. Equivalently, $V - \log(1 + \epsilon W_i)$ is convex. This implies $\log(1 + \epsilon W_i)$ is concave, i.e. $\Delta^2 h_{\epsilon}(z)\leq 0$ for all $z \in  \llbracket a,b \rrbracket$, where $h_{\epsilon}(z) = \log(1 + \epsilon W_i(z))$. Consider the two cases:  $\frac{\Delta^2 h_{\epsilon}(z)}{\epsilon} \leq 0$ when $\epsilon>0$ and $\frac{\Delta^2 h_{\epsilon}(z)}{\epsilon} \geq 0$ when $\epsilon<0$. Since  $\lim_{\epsilon \to 0}\, \frac{h_{\epsilon}(z)}{\epsilon} = W_i (z)$, we conclude by taking the limit $\epsilon \to 0$ in $\frac{\Delta^2 h_{\epsilon}(z)}{\epsilon}$. \ \vskip1mm
The following fact verifies Lemma \ref{discrete and continuous deg. of freedom} for log-affine functions $e^{-V}$ (equivalently when $V$ is affine).\vskip2mm
\noindent
\textbf{FACT II:} Let $m,n \in \mathbb{R}$. Let $V(z) = mz+n$ be defined on $\llbracket a,b \rrbracket$. Then, the degree of freedom of $e^{-V}$ is $2$. \
\vskip1mm
Without loss of generality, assume that $V$ is non-constant affine. Choose $W_1(z) = \mathbbm{1}_{\llbracket a,b \rrbracket} $ and $W_2(z) = V(z) $. Let $\alpha = \min(\frac{1}{2},(2\max(|W_1| +|W_2|))^{-1}).$ Then, for each $(\epsilon_1,\epsilon_2) \in [-\alpha,\alpha]^2$, we have $1+ \epsilon_1W_1 + \epsilon_2W_2 \geq \frac{1}{2}$, i.e. $\log(1+ \epsilon_1W_1(z) + \epsilon_2W_2(z))$ is defined for all $z \in \llbracket a,b \rrbracket$. Moreover, $W_1$ and $W_2$ are linear independent. Let us check the log-concavity condition. For convenience, let $\mu = e^{-V}(1+ \epsilon_1W_1 + \epsilon_2W_2)$. The result is trivial if $b-a=1$. Assume $b-a\geq 2$. Then, $$\mu^2(z) - \mu(z-1)\mu(z+1) = e^{-2(mz+n)} (\epsilon_2 m)^2,$$ which is non-negative for all $\abs{\epsilon_i}\leq \alpha$ and $z \in \llbracket a,b \rrbracket$. This proves that $e^{-V}$ has at least $2$ degrees of freedom. In fact, the degree of freedom cannot exceed $2$, since our choice for linear independent functions satisfying Definition \ref{deg. of freedom}, is necessarily affine by \textbf{Fact I}.\ 
\begin{remark}
Note that if $e^{-V}$ is supported at a point, i.e. $e^{-V} = c\mathbbm{1}_{\{a\}}$ for some $c>0$, with $V(z)= -\log c$ at $z=a$ and $\infty$ otherwise, then we identify the degree of freedom of $e^{-V}$ as $1$. Indeed, since one can choose $W_1=V$ and $\alpha = \min(\frac{1}{2}, \frac{1}{2|V|})$ satisfying Definition \ref{deg. of freedom} (with the convention $e^{-V}V=0$ for $V(z) = \infty$). 
\end{remark}
Let us now prove Lemma \ref{discrete and continuous deg. of freedom}. We shall assume that $V$ is not affine, i.e. $b-a \geq 2$. 
\begin{proof}[Proof of Lemma \ref{discrete and continuous deg. of freedom}]
By construction, $\Bar{V}$ is piecewise linear with a finite number of parts, say $k (\geq 2)$, i.e. $\Bar{V}$ is the maximum of $k$ (and no less) affine functions defined on $[a,b]$. Then by \cite[Proposition 2]{FG}, the degree of freedom of $e^{-\Bar{V}}$ is $k+1$. We will show that the degree of freedom of $e^{-V}$ is $k+1$. Let $z_0=a$ and $z_{k} = b$. There exist points $a<z_1<z_2<...<z_{k-1}<b$ such that $\bar{V}$ is affine on $[z_{i-1},\, z_i]\,$ for $i=1,2,\dots,k$. First, we show that the degree of freedom of $e^{-V}$ is \textbf{\textit{at least $k+1$}}. The idea is to use the construction in \cite{FG} to obtain $k+1$ linear independent functions satisfying Definition \ref{deg. of freedom}.\ \vskip1mm
Due to convexity of $\bar{V}$, one can find  $x_1<x_2<...<x_k$ such that
\begin{itemize}
    \item $z_{i-1}<x_i<z_i$ for $i=1,2,\dots,k$. 
    \item $\bar{V}^\prime (x_i) < \bar{V}^\prime(x_{i+1})$ for $i=1,2,\dots,k-1$.
\end{itemize}
For each $i=1,2,\dots,k$, define \[\Bar{W}_i(x) = \begin{cases} 
            \Bar{V}(x) &  x < x_i\\
      
      \Bar{V}(x_i) + (x-x_i)\Bar{V}^\prime (x_i)  &  x \geq x_i
   \end{cases}
\] and $\bar{W}_0 = \mathbbm{1}_{[a,b]}\,$ (let us assume $\Bar{V}(x_1) \neq 0$ without loss of generality, since otherwise one can reflect the whole picture about $\frac{1}{2}(a + b)$. Equivalently
one can do the construction ``from the right"). Denote by $W_i$, the restriction of $\Bar{W}_i$ to $\llbracket a,b \rrbracket$, i.e. for $z \in \llbracket a,b \rrbracket$
\[W_i(z) = \begin{cases} 
            V(z) &  z < x_i\\
      
      \Bar{V}(x_i) + (z-x_i)\Bar{V}^\prime (x_i)  &  z\geq x_i
   \end{cases}
 \] and $W_0 = \mathbbm{1}_{\llbracket a,b \rrbracket}$. \ \vskip1mm
 We claim that $\{W_i\}_{i=0}^k$ satisfies Definition \ref{deg. of freedom}.  First, we show the log-concavity condition. It follows from \cite[Proposition 2]{FG} that for a sufficiently small $\alpha>0$, the function $e^{-\Bar{V}}(1+ \sum_{i=0}^k \epsilon_i \Bar{W}_i)$ is log-concave on $[a,b]$. Since $e^{-\Bar{V}}(1+ \sum_{i=0}^k \epsilon_i \Bar{W}_i) = e^{-V}(1+ \sum_{i=0}^k \epsilon_i W_i)$ on $\llbracket a,b \rrbracket$, we conclude that for all $\epsilon_i \in [-\alpha,\alpha]$, $e^{-V}(1+ \sum_{i=0}^k \epsilon_i W_i)$ is log-concave on $\llbracket a,b \rrbracket$. \ \vskip1mm
Next, we verify the linear independence of $\{W_i\}_{i=0}^k$. Let $h(z) = \sum_{i=0}^k c_i W_i(z)$. We need to show that $h(z) = 0$ for all $z \in \llbracket a,b \rrbracket$ implies $c_i=0,\,i=0,1,2,...k.$ Let us first show that $c_{k}=0$. Letting $z=z_k, z_{k-1}$ and $z=z_{k-2}$ in $h(z)=0$, respectively, we obtain the following linear equations.
\begin{align*}
    L_1\coloneqq c_0 + \sum_{i=1}^k c_i \left(\Bar{V}(x_i) + (z_{k}-x_i)\Bar{V}^\prime (x_i)\right) &= 0.\\
 L_2 \coloneqq c_0 + \sum_{i=1}^{k-1} c_i \left(\Bar{V}(x_i) + (z_{k-1}-x_i)\Bar{V}^\prime (x_i)\right) + c_k V(z_{k-1}) &= 0.\\
   L_3\coloneqq c_0 + \sum_{i=1}^{k-2} c_i \left(\Bar{V}(x_i) + (z_{k-2}-x_i)\Bar{V}^\prime (x_i)\right) + (c_{k-1}+c_k) V(z_{k-2})  &= 0.
\end{align*}
$L_1- L_2 =0$ and dividing by $z_k - z_{k-1}$ yield $\sum_{i=1}^{k-1} c_i \Bar{V}^\prime (x_i)+ c_k\frac{V(z_k)-V(z_{k-1})}{z_k - z_{k-1}}=0$. Equivalently, $\sum_{i=1}^k c_i \Bar{V}^\prime (x_i) = 0$, since $\Bar{V}$ is affine on $[z_k - z_{k-1}]$ with derivative $\Bar{V}^\prime (x_k)$. Similarly, $\sum_{i=1}^{k-1} c_i \Bar{V}^\prime (x_i) + c_k \Bar{V}^\prime (x_{k-1}) = 0$ follows from $L_2-L_3 = 0$, and dividing by $z_{k-1}-z_{k-2}$. Then, the result follows by subtracting these two equations. \vskip1mm One may proceed similarly to show $c_i=0$ for $i=1,2,\dots,k-1$. After plugging $z=z_{k-3},z_{k-4},\dots,z_0$ into $h(z)=0$, simplifying as before and combining with the case $c_k = 0$, one obtains the following system of $k-1$ linear equations.
\begin{align*}
 M_1 \coloneqq \sum_{i=1}^{k-1} c_i \Bar{V}^\prime (x_i) &=0.\\
M_2 \coloneqq\sum_{i=1}^{k-2} c_i \Bar{V}^\prime (x_i) + c_{k-1} \Bar{V}^\prime (x_{k-2}) &=0.\\
M_3 \coloneqq\sum_{i=1}^{k-3} c_i \Bar{V}^\prime (x_i) + (c_{k-2}+ c_{k-1}) \Bar{V}^\prime (x_{k-3}) &=0.\\
      .\\
      .\\
       .\\
 M_{k-1} \coloneqq \sum_{i=1}^{k-1} c_i &= 0.
\end{align*}
$M_1 - M_2 = 0 \implies c_{k-1}=0,\, M_2-M_3 = 0 \implies c_{k-2} = 0$ and so on. \ \vskip1mm
Let us now prove the reverse direction, i.e. the degree of freedom of $e^{-V}$ is \textbf{\textit{ at most $k+1$}}. The argument is analogous to the continuous case. For completeness, we explain the details below.\ \vskip1mm
Suppose on contrary that the degree of freedom of $e^{-V}$ is more than $k+1$. Then, there exist $\alpha>0$ and $k+2$ linear independent functions $V_1,V_2,\dots,V_{k+2}$ satisfying Definition \ref{deg. of freedom}. Since $\bar{V}$ is affine on each $[z_{i-1}, z_i],\,$ its restriction to $\llbracket a,b \rrbracket$, $V$ is also affine on each $\llbracket z_{i-1}, z_i \rrbracket.$ By \textbf{Fact I}, each $ V_j$ should also be affine on $\llbracket z_{i-1}, z_i \rrbracket$ for $i=1,2,\dots,k$. Let $C$ be the space of continuous functions defined on $\llbracket a,b \rrbracket$ that are affine on each $\llbracket z_{i-1}, z_i \rrbracket.$ We claim that $\dim(C) = k+1$. \ \vskip1mm
For each $i\in \{0,1,2,\dots,k\},\,$ define $f_i : \llbracket a,b \rrbracket \to \mathbb{R}$ such that
\begin{equation*}
f_i(z)= \begin{cases}
\dfrac{z-z_{i-1}}{z_i-z_{i-1}}& z \in \llbracket z_{i-1},z_i \rrbracket \\
1 - \dfrac{z-z_i}{z_{i+1}-z_i}& z \in \llbracket z_i,z_{i+1}\rrbracket\\
0& \mathrm{otherwise}
\end{cases}
\end{equation*}
 \vskip1mm
Clearly, $f_i \in C$ for all $i=0,1,\dots,k.$ By construction, $\{f_i\}_{i=0}^k$ is linear independent. Take any function $\ell \in C$. Then, its restriction to $\llbracket z_i, z_{i+1} \rrbracket$, denoted by $\Hat{\ell_i}$, is given by
$$\Hat{\ell_i}(z) \coloneqq \ell(z_i)+\frac{\ell(z_{i+1})-\ell(z_i)}{z_{i+1}-z_i}\,(z-z_i).$$
Choose $\beta_i = \ell(z_i)$ so that on $ \llbracket z_i, z_{i+1} \rrbracket, \Hat{\ell_i}= \beta_if_i+\beta_{i+1}f_{i+1}$ for all $i=0,1,2,\dots,k-1$. Hence, $\sum_{i=0}^k\beta_i f_i = \ell$. 
This implies $\{f_i\}_{i=0}^k$ spans $C$. Therefore, the collection $\{f_i\}_{i=0}^k$ forms a basis for $C$, proving the claim, and hence contradicting the supposition. 

\end{proof}
\section{Min-entropy-variance \& R\'enyi entropy inequalities}
\label{S3}
This section is devoted to the investigation of entropy inequalities. First, we prove Theorem \ref{EPI}. We need the following proposition from \cite{BobMarMel}.
\begin{proposition}[\cite{BobMarMel}]
\label{entropy bounds}
Let $1< \alpha \leq \infty$. If $X$ is any integer valued random variable with finite
variance, then $$1 \leq N_{\alpha}(X) \leq 1 + \dfrac{4(3 \alpha-1)}{\alpha -1} \mathrm{Var}(X).$$
\end{proposition}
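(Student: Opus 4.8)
The lower bound is immediate. For finite $\alpha>1$ we have $p(z)^{\alpha}\le p(z)$ for every $z$, hence $\sum_z p(z)^{\alpha}\le 1$; since the exponent $-2/(\alpha-1)$ is negative, raising to this power reverses the inequality and gives $N_{\alpha}(X)\ge 1$, while the case $\alpha=\infty$ is just $\max_z p(z)\le 1$. All the content is in the upper bound, and my plan is to pass to the \emph{layer-cake} description of $p$, which is the clean incarnation of the rearrangement/majorization idea used in \cite{BobMarMel}. Set $M=\max_z p(z)$ and $\ell(t)=\card\{z:p(z)>t\}$ for $t\in(0,M]$, a nonincreasing positive-integer-valued step function. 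From $p(z)^{\beta}=\beta\int_0^{p(z)}t^{\beta-1}\,dt$ and Fubini one gets
\begin{align*}
\int_0^M \ell(t)\,dt=\sum_z p(z)=1, \qquad \sum_z p(z)^{\alpha}=\alpha\int_0^M t^{\alpha-1}\ell(t)\,dt ,
\end{align*}
while the variance obeys
\begin{equation*}
\mathrm{Var}(X)=\int_0^M\Big(\sum_{z\in A_t}(z-\E X)^2\Big)dt\ \ge\ \frac{1}{12}\Big(\int_0^M \ell(t)^3\,dt-1\Big),
\end{equation*}
where $A_t=\{z:p(z)>t\}$ and the bound uses that an $\ell$-element subset $A$ of $\mathbb{Z}$, with mean $\bar A$, satisfies $\sum_{z\in A}(z-\bar A)^2\ge\frac{1}{12}(\ell^3-\ell)$, the minimum being attained on a block of consecutive integers.

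These three facts collapse the statement to a one-variable inequality. Because the target bound $1+\frac{4(3\alpha-1)}{\alpha-1}\mathrm{Var}(X)$ increases in $\mathrm{Var}(X)$ and $\mathrm{Var}(X)\ge\frac{1}{12}(\int_0^M\ell^3-1)$, it is enough to show, for every nonincreasing $\ell\ge 0$ on some $(0,M]$ with $\int_0^M\ell=1$,
\begin{equation*}
\Big(\alpha\int_0^M t^{\alpha-1}\ell\,dt\Big)^{-2/(\alpha-1)}\ \le\ 1+\frac{3\alpha-1}{3(\alpha-1)}\Big(\int_0^M \ell^3\,dt-1\Big),
\end{equation*}
where I have relaxed $\ell$ to all nonnegative nonincreasing profiles, a superset of the admissible step functions (so this can only be stronger, and it retains the crucial discrete gain encoded in the $-1$). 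For $\alpha=\infty$ it is transparent: Jensen's inequality gives $\frac1M\int_0^M\ell^3\ge(\frac1M\int_0^M\ell)^3=M^{-3}$, so $\int_0^M\ell^3\ge M^{-2}=N_\infty(X)$, i.e.\ $N_\infty(X)\le 1+(\int_0^M\ell^3-1)\le 1+12\,\mathrm{Var}(X)$, matching the claimed constant $12$ in the limit.

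For finite $\alpha$ I would read the displayed inequality as a two-constraint optimization: with $\int_0^M\ell=1$ and $\int_0^M\ell^3=B$ held fixed, maximize $N_\alpha=(\alpha\int t^{\alpha-1}\ell)^{-2/(\alpha-1)}$, equivalently minimize $\int_0^M t^{\alpha-1}\ell$. This is precisely the several-constraint regime in which the extreme-point viewpoint of the present paper is natural; a Lagrange computation (stationarity $t^{\alpha-1}-\lambda-3\mu\ell^2=0$) singles out the extremal profile $\ell(t)=c\,(a-t^{\alpha-1})_+^{1/2}$, the layer-cake signature of a truncated generalized-Gaussian mass function. Substituting this $\ell$ and evaluating the three integrals as Beta integrals in the variable $u=t^{\alpha-1}$ turns the inequality into an explicit comparison depending on $\alpha$ alone. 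The main obstacle is exactly this last stage: carrying out the extremal evaluation cleanly and verifying that the non-sharp constant $\frac{3\alpha-1}{3(\alpha-1)}$ dominates the resulting Beta-function ratio for all $\alpha>1$, together with confirming (via a second-variation or localization argument) that the Lagrange profile is the genuine maximizer rather than a spurious critical point.
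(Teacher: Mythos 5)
First, a point of reference: the paper does not prove this proposition at all --- it is imported as a black box from \cite{BobMarMel} and used only to finish Theorem \ref{EPI} --- so your attempt must be judged on its own merits rather than against an in-paper argument. On those merits, your lower bound is correct, and your $\alpha=\infty$ case is complete and correct: the layer-cake identities, the discrete bound $\sum_{z\in A}(z-\bar{A})^2\ge\frac{1}{12}(\ell^3-\ell)$ for an $\ell$-point subset of $\mathbb{Z}$, and Jensen's inequality give $N_{\infty}(X)=M^{-2}\le\int_0^M\ell^3\,dt\le 1+12\,\Var(X)$, which matches the $\alpha\to\infty$ limit of the stated constant. This is indeed the rearrangement/majorization mechanism behind the cited result.

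For finite $\alpha$, however, there is a genuine gap, located exactly where you relax $\ell$ to \emph{all} nonnegative nonincreasing profiles: the relaxed one-variable inequality is false. Take $\ell\equiv 1/M$ on $(0,M]$ with $M>1$. Then $\int_0^M\ell\,dt=1$ and $\alpha\int_0^M t^{\alpha-1}\ell\,dt=M^{\alpha-1}$, so your left-hand side equals $M^{-2}$, while $\int_0^M\ell^3\,dt=M^{-2}<1$ makes the right-hand side $1+\frac{3\alpha-1}{3(\alpha-1)}\left(M^{-2}-1\right)<M^{-2}$, since the coefficient $\frac{3\alpha-1}{3(\alpha-1)}$ exceeds $1$ for every $\alpha>1$; concretely, $\alpha=2$ and $M=2$ ask for $\frac14\le-\frac14$. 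What protects genuine mass functions is integrality: $\ell(t)\in\{1,2,3,\dots\}$ on $(0,M)$, hence $\ell\ge 1$ there, $M\le 1$, and $\int_0^M\ell^3\,dt\ge\int_0^M\ell\,dt=1$ (so the sign needed to trade $\int\ell^3-1$ for $12\,\Var(X)$ is also guaranteed). Your ``$-1$'' gain is inseparable from this constraint, and the relaxation discards precisely it. This also undermines the proposed endgame: the unconstrained Lagrange profile $c\,(a-t^{\alpha-1})_+^{1/2}$ decays to $0$, so it violates the active constraint $\ell\ge 1$ on a set of positive measure and cannot be the extremizer of the correct problem --- any extremal analysis must handle a terminal interval where $\ell=1$ (or honest integer steps). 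Finally, even granting the right constraint set, the Beta-integral verification of the constant is acknowledged but not carried out. So beyond the boundary case $\alpha=\infty$, the proposition remains unproven in your write-up.
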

\begin{proof}[Proof of Theorem \ref{EPI}] Let $S_n = X_1+ X_2 +\dots+X_n$, where $X_i$'s are independent discrete log-concave. Since log-concavity is preserved under independent summation, we apply inequality (\ref{b1}) of Theorem \ref{min-entropy} to $X = S_n$ to get the following:
$$\Delta_{\infty}(S_n) \geq \sum_{i=1}^n\mathrm{Var}(X_i).$$ 
The function $\alpha \to N_{\alpha}(X)$ is non-increasing. Therefore, one can extend the same lower bound to $\Delta_{\alpha}(S_n)$. We conclude by applying Proposition \ref{entropy bounds} to $X_i$, i.e. $$\Delta_{\alpha}(S_n) \geq \sum_{i=1}^n\mathrm{Var}(X_i) \geq \dfrac{\alpha -1}{4(3 \alpha-1)} \sum_{i=1}^n\Delta_{\alpha}(X_i). $$
\end{proof}

\begin{remark}\
\label{concentration function}
Given a discrete random variable $X$, its concentration function is defined by 
\begin{equation*}
Q(X;\lambda) = \displaystyle \sup_{z} \mathbb{P}\{z \leq X \leq z + \lambda\}, \,\,\, \lambda \geq 0.
\end{equation*}
 In particular, 
 \begin{equation*}
N^{-\frac{1}{2}}_{\infty}(X) = Q(X ; 0).
\end{equation*}
Theorem \ref{min-entropy}, together with \cite[Lemma 8.1]{BobMarMel}, implies the following for $X$- discrete log-concave random variables:
\begin{equation*}
    Q(X ;\lambda) \leq \frac{\lambda+1}{\sqrt{1+\frac{\lambda (\lambda+2)}{12} + \Var(X) }}.
\end{equation*}
This improves Proposition 8.3 from \cite{BobMarMel}. 
\end{remark}

We now proceed to the proof of Theorem \ref{min-entropy}. The idea is to set up the problem as an optimization problem with two constraints. This will allow us to reduce the inequality (\ref{b1}) to an extremal case, which we can characterize using Theorem \ref{necessary condition}.\ \vskip1mm 
Denote by $M(X)$, the $M$-functional, which is defined as $\sup_n \mathbb{P}(X=n)$. By letting $\alpha \to \infty$ in $N_{\alpha}(X)$, we have $$N_{\infty}(X) = M(X)^{-2}.$$ Therefore, inequality (\ref{b1}) is equivalent to 
\begin{equation}
\label{M-fun}
    M^2(X) (1 + \mathrm{Var}(X)) \leq 1.
\end{equation}

\begin{proof}[Proof of Theorem \ref{min-entropy}] Fix a log-concave random variable $X_0$. By approximation, one may assume that $X_0$ is compactly supported. Note that inequality (\ref{M-fun}) is invariant under translation. By definition, log-concavity is translation invariant as well. Therefore, we further assume that $X_0$ is supported on $\llbracket 0,L \rrbracket$, where $L \in \mathbb{Z}^{+}$. Denote by $\mathcal{P}(\llbracket 0,L \rrbracket)$, the set of log-concave probabilities supported on $\llbracket 0,L \rrbracket$. Let $h_1$ and $h_2$ be arbitrary functions defined on $\llbracket 0,L \rrbracket$ such that $h_1(n)=n^2 - \mathbb{E}[X_0^2]$ and $h_2(n)= \mathbb{E}[X_0]-n$ for all $n \in \llbracket 0,L \rrbracket $. Let $h = (h_1,h_2)$.\ \vskip1mm
Consider the set of all log-concave probability sequences supported on $\llbracket 0,L \rrbracket$ satisfying $\mathbb{E}[h_1(X)] \geq 0$ and $\mathbb{E}[h_2(X)] \geq 0$,
$$\mathcal{P}_h(\llbracket 0,L \rrbracket) = \{\mathbb{P}_X \in \mathcal{P}(\llbracket 0,L \rrbracket) \,:\, X \,\,\text{log-concave}\,,\, \mathbb{E}[h_1(X)]\geq 0 \,\text{and}\, \, \mathbb{E}[h_2(X)]\geq 0 \,\}\,.$$ 
Note that $\P_{X_0} \in \mathcal{P}_h(\llbracket 0,L \rrbracket)$ so ${P}_h(\llbracket 0,L \rrbracket)$ is non-empty. Denote by $\mathrm{conv} (\mathcal{P}_h(\llbracket 0,L \rrbracket))$, the convex hull of $\mathcal{P}_h(\llbracket 0,L \rrbracket)$. Suppose $\mathbb{P}_X$ is any extreme point in $\mathrm{conv} (\mathcal{P}_h(\llbracket 0,L \rrbracket))$. Let us assume that $\mathbb{P}_X$ is supported on $\llbracket \Tilde{K},\Tilde{L} \rrbracket \subseteq \llbracket 0,L \rrbracket$. Let $e^{-V}$ be the probability mass function of $\mathbb{P}_X$, where $V$ is convex on $\llbracket \Tilde{K},\Tilde{L}  \rrbracket$.\\\\
\textbf{STEP I:} The reduction of the inequality (\ref{M-fun}) to extreme points.\ \vskip1mm
This is essentially a consequence of the Krein-Milman (finite-dimensional) theorem, i.e. the supremum of any convex functional $\Phi$ over the set $\mathcal{P}_h(\llbracket 0, L \rrbracket)$ is attained at extreme points. Letting $\Phi(\mathbb{P}_X) = \sup_n \mathbb{P}(X=n)$, it suffices to prove the inequality (\ref{M-fun}) for $\mathbb{P}_X$- extreme.\\\\
\textbf{STEP II:} $e^{-V}$ has at most $3$ degrees of freedom. \vskip1mm
By contradiction, assume the degree of freedom of $e^{-V}$ is more than $3$. Then, there exist linear independent functions $W_1,W_2,W_3$ and $W_4$ defined on $\llbracket \Tilde{K},\Tilde{L}  \rrbracket$ and $\alpha>0$ such that for any $(\epsilon_1, \epsilon_2,\epsilon_3, \epsilon_4) \in [-\alpha,\alpha]^4$, the function $e^{-V} (1+ \sum_{i=1}^4 \epsilon_iW_i)$ is log-concave on $\llbracket \Tilde{K},\Tilde{L}  \rrbracket$. Let us consider the following set, which can be viewed as a $4-$dimensional cube centered at $\mathbb{P}_{X}$:
$$A=\{\,\text{log-concave functions of the form}\,\,e^{-V} (1+ \sum_{i=1}^4\epsilon_iW_i):\, |\epsilon_i| \leq \alpha\,\,\text{for all\,\,}i\}.$$
The set $A$ is not necessarily contained in $\mathcal{P}_h(\llbracket 0,L \rrbracket)$. Denote by $\Tilde{A}$, the following subset of $A$: 
\begin{equation*}
    \{\text{log-concave probability mass functions in $A$ satisfying \,\,} \mathbb{E}[h_1(X)]= 0\,\text{and}\,\, \mathbb{E}[h_2(X)]= 0 \}.
\end{equation*}
 $\Tilde{A} \neq \emptyset$ and $\Tilde{A}\subset \mathcal{P}_h(\llbracket 0,L\rrbracket)$. Consider the following system of linear equations:

\begin{align*}
    \epsilon_1 \sum W_1e^{-V}+\epsilon_2 \sum  W_2e^{-V}+ \epsilon_3 \sum  W_3e^{-V}+\epsilon_4 \sum W_4e^{-V} &= 0.\\
     \epsilon_1 \sum zW_1e^{-V}+\epsilon_2 \sum  zW_2e^{-V}+ \epsilon_3 \sum  zW_3e^{-V} + \epsilon_4 \sum zW_4e^{-V} &= 0.\\
     \epsilon_1 \sum z^2W_1e^{-V}+\epsilon_2 \sum  z^2W_2e^{-V}+ \epsilon_3 \sum  z^2W_3e^{-V} + \epsilon_4 \sum z^2W_4e^{-V} &= 0.
\end{align*}
When these equations hold, the associated probability mass functions lie in $\Tilde{A}$. Hence, the solutions of $(\epsilon_1,\epsilon_2,\epsilon_3,\epsilon_4)$  must lie on a line segment passing through
the origin. In particular, there exist $\mathbb{P}_{X_1}$
and $\mathbb{P}_{X_2}$ with mass functions of the form $e^{-V} (1+ \sum_{i=1}^4\epsilon_iW_i)$ and $e^{-V} (1 - \sum_{i=1}^4\epsilon_iW_i)$, respectively. But then, $\mathbb{P}_X = \frac{\mathbb{P}_{X_1} + \mathbb{P}_{X_2}}{2},\,$ a contradiction.\\\\
\textbf{STEP III:} Extreme points are log-affine or piecewise log-affine with 2 parts.\ \vskip1mm
It follows from Theorem \ref{necessary condition} that the mass function of any extremal is $e^{-V}$, where $V$ is affine or the maximum of $2$ affine functions on $\llbracket \Tilde{K},\Tilde{L}  \rrbracket$. If $X$ is log-affine (or piecewise log-affine) on $\llbracket \Tilde{K},\Tilde{L}  \rrbracket$, then $\Tilde{X} = X-\Tilde{K}$ is log-affine (or piecewise log-affine) on $\llbracket 0,M\rrbracket$, where $M = \Tilde{L}-\Tilde{K}$, and 
$$M^2(X)(1+\text{Var}(X)) = M^2(\Tilde{X})(1+\text{Var}(\Tilde{X})). $$
Therefore, it suffices to prove the desired inequality for $X$-log-concave with the mass function $e^{-V}$, where $V$ is affine or the maximum of $2$ affine functions on $\llbracket 0, M\rrbracket$. Note that if $V$ is affine, then $e^{-V}$ is monotone. Moreover, $e^{-V}$ is monotone if $V$ is non-increasing (or non-decreasing) with $2$ parts. Finally, if $e^{-V}$ is non-monotone, then it is of the following form
  \begin{equation*}
 \mathbb{P}(X=n)= C e^{-\alpha_1(N-n)} \mathbbm{1}_{\llbracket0,N \rrbracket}(n) + C e^{-\alpha_2(n-N)} \mathbbm{1}_{\llbracket N+1,M \rrbracket}(n), \end{equation*} where $C, \alpha_1, \alpha_2>0, N\geq 1$ and $M\geq2$. 
 \end{proof}
\section{Proof for extremals}
\label{S4}
\underline{\textit{CASE I}:} When $e^{-V}$ is monotone.\vskip2mm
We shall prove a slightly stronger result, i.e. we show that the inequality (\ref{M-fun}) holds for any non-negative monotone log-concave random variable (not necessarily having $3$ degree of freedom). Without loss of generality, assume that $X$ is non-increasing and supported on $\llbracket 0,M \rrbracket.$ Let $p$ be its probability mass function. Let $Z$ be geometric with the mass function $q(n) = \theta (1-\theta)^n,\, n=0,1,2,\dots$ Choose $\theta = \frac{1}{\mathbb{E}[X]+1}$ so that $\mathbb{E}[Z] = \mathbb{E}[X]$. It follows from [Theorem 2.6, \cite{MM2}] that $Z$ majorizes\footnote{A random variable $X$ is majorized by $Z$ in the convex order if $\mathbb{E}[\varphi(X)] \leq \mathbb{E}[\varphi(Z)]$ holds for any convex function $\varphi$.} $X$ in the convex order. In particular, we have $\mathbb{E}[X^2] \leq \mathbb{E}[Z^2]$. Equivalently, $\mathrm{Var}(X) \leq \mathrm{Var}(Z).$\vskip1mm
In fact, more is proven in \cite[Theorem 2.6]{MM2}, functions $p$ and $q$ cross exactly twice. However, since $\log(p)$ is concave and $\log(q)$ is affine, these functions crossing exactly twice necessarily imply that $p(0) \leq q(0)$. Since $p$ and $q$ are non-increasing, we have $\sup_n \mathbb{P}(X=n)=p(0)$ and $\sup_n \mathbb{P}(Z=n) = q(0)$, which imply
$$M(X)\leq M(Z).$$
For $Z$ geometric, it is easy to see that
$$ M^2(Z) (1 + \Var(Z)) \leq 1. $$
Therefore, combining the above, we conclude that
$$ M^2 (X) (1 + \Var(X))\leq M^2 (Z) (1 + \Var(Z))\leq 1. $$
\underline{\textit{CASE II}:} When $e^{-V}$ is non-monotone.\vskip2mm
We need to verify the inequality (\ref{M-fun}) for distributions of the following form $$\mathbb{P}(X=n)= C p_1^{-(N-n)} \mathbbm{1}_{\llbracket0, N \rrbracket}(n) + C p_2^{-(n-N)} \mathbbm{1}_{\llbracket N+1, N+K \rrbracket}(n),$$ where $p_1 = e^{\alpha_1}, p_2 = e^{\alpha_2}$ and $K = M-N.$ \ \vskip1mm

Observe $M(X) = C$. Since $\displaystyle \sum_{n=0}^{N+K} \mathbb{P}(X=n) = 1$, we have $$C \left( \sum_{n=0}^N p_1^{-n} + \sum_{n=1}^K p_2^{-n} \right) = 1.$$ Let $A = \displaystyle\sum_{n=0}^N p_1^{-n} $ and $B = \displaystyle \sum_{n=1}^K p_2^{-n}.$ Then, $C(A+B) = 1$. Let us compute the  variance of $X$. 
$$\mathbb{E}[X^2] = C \left(\sum_{n=0}^N \frac{n^2} {p_1^{N-n}} + \sum_{n=N+1}^{N+K} \frac{n^2}{p_2^{n-N}} \right)= C \left(\sum_{n=0}^N \frac{(N-n)^2}{p_1^n} + \sum_{n=1}^{K} \frac{(N+n)^2 }{p_2^n} \right).$$ 
After simplifying and substituting $A$ and $B$, the right-hand side is equal to,
$$C \left(  N^2 (A+B) + 2N \left( \sum_{n=1}^K \frac{n}{ p_2^n}- \sum_{n=1}^N \frac{n}{ p_1^n} \right ) +  \sum_{n=1}^K \frac{n^2}{ p_2^n}+ \sum_{n=1}^N \frac{n^2}{ p_1^n} \right).$$ By proceeding similarly, we get the following expression for $\mathbb{E}[X]^2.$ 
$$\mathbb{E}[X]^2 = C^2 \left((A+B)^2 N^2 + 2N (A+B) \left( \sum_{n=1}^K \frac{n}{ p_2^n}- \sum_{n=1}^N \frac{n}{ p_1^n} \right) + \left(\sum_{n=1}^K \frac{n}{ p_2^n}- \sum_{n=1}^N \frac{n }{p_1^n} \right)^2\,\right).$$ Combining $\mathbb{E}[X^2]$ and $\mathbb{E}[X]^2$, we get $$\mathrm{Var}(X) = \frac{1}{(A+B)^2}\left( (A+B) \left( \sum_{n=1}^K \frac{n^2}{ p_2^n}+ \sum_{n=1}^N \frac{n^2}{ p_1^n}\right)- \left(\sum_{n=1}^K \frac{n}{ p_2^n}- \sum_{n=1}^N \frac{n }{p_1^n} \right)^2\,\right).$$ After plugging the expressions for $M(X)$ and $\mathrm{Var}(X)$ into inequality (\ref{M-fun}) and rearranging the terms, we wish to prove for all integers $N, K\geq 1$ and real numbers $p_1,p_2\geq 1$,
\begin{equation}
\label{E4}
(A+B)^2 -1- \frac{1}{(A+B)} \left( \sum_{n=1}^K \frac{n^2}{ p_2^n}+ \sum_{n=1}^N \frac{n^2 }{p_1^n}\right) + \frac{1}{(A+B)^2} \left(\sum_{n=1}^K \frac{n}{p_2^n}- \sum_{n=1}^N \frac{n }{p_1^n} \right)^2 \geq 0.  
\end{equation}

\begin{remark}
The term $\frac{1}{(A+B)^2} \left(\sum_{n=1}^K \frac{n}{p_2^n}- \sum_{n=1}^N \frac{n }{p_1^n}\right)^2$ in the inequality may not be dropped. For example, take $p_1= \frac{5}{3},   p_2 = 500, N=9$ and $K=1$. Then
\begin{align*}
    (A+B)^3 - (A+B) - \left( \sum_{n=1}^K \frac{n^2}{ p_2^n}+ \sum_{n=1}^N \frac{n^2 }{p_1^n} \right)<0.
\end{align*}
\end{remark}
The rest of the paper is devoted to the proof of  
 inequality (\ref{E4}). In fact, we will prove a more general result. Let $x,y>0$. Let $S = \sum_{n=0}^N x^n$ and $T = \sum_{n=1}^K y^n.$ 
 
 We show for all $N,K\geq 1$,
\begin{align}
\label{E5}
(S+T)^4 - (S+T)^2 - (S+T)\left(\sum_{n=1}^N n^2\,x^n + \sum_{n=1}^K n^2\,y^n\right) + \left(\sum_{n=1}^N n\,x^n - \sum_{n=1}^K n\,y^n\right)^2 \geq 0.
\end{align}
After simplifying and rearranging the terms, the inequality (\ref{E5}) is equivalent to,
\begin{align*}
S^4 - S^2 - S \sum_{n=1}^N n^2\,x^n  & + \left( \sum_{n=1}^N n\,x^n\right)^2 \\
& + 2S^3\,T - T\sum_{n=1}^N n^2\,x^n- 2S\,T +3S^2\,T^2- 2\sum_{n=1}^N n\,x^n \,\sum_{n=1}^K n\,y^n \\
& + 2S\,T^3 - S\sum_{n=1}^K n^2\,y^n + 3S^2\,T^2 + S^3\,T\\
& + T^4 - T^2 + 2S\,T^3 + S^3\,T - T\sum_{n=1}^K n^2\,y^n + \left( \sum_{n=1}^K n\,y^n\right)^2 \geq 0.
\end{align*}
Therefore, it suffices to prove each of the following inequalities.

\begin{enumerate}[(I)]
 \item \label{I} $S^4 - S^2 - S \sum_{n=1}^N n^2\,x^n + \left( \sum_{n=1}^N n\,x^n\right)^2\geq 0.$
 \item \label{II} $2S^3\,T - T\sum_{n=1}^N n^2\,x^n- 2S\,T + 3S^2\,T^2 - 2\sum_{n=1}^N n\,x^n \,\sum_{n=1}^K n\,y^n \geq 0.$
\item \label{III} $2S\,T^3 - S\sum_{n=1}^K n^2\,y^n + 3S^2\,T^2 + S^3\,T  \geq 0.$ 
\item \label{IV} $T^4 - T^2 + 2S\,T^3 + S^3\,T - T\sum_{n=1}^K n^2\,y^n + \left( \sum_{n=1}^K n\,y^n\right)^2 \geq 0$.
\end{enumerate}
 We need the following lemma. The proof is elementary and included in the appendix.
 \begin{lemma} \label{poly}\  \begin{enumerate}[(a)]

 \item \label{a}  $S^2 = \displaystyle \sum_{n=0}^N (n+1)\,x^n + \displaystyle \sum_{n=N+1}^{2N} (2N-n+1)\,x^n.$
 
 \item \label{b} $S^3\geq \dfrac{1}{2}\, \displaystyle \sum_{n=0}^{N} (n+1)(n+2)\,x^n + \dfrac{1}{2}  \sum_{n=N+1}^{2N} \big[(3N-n+1)(n-N) + (n+2)(2N-n+1)\big]\,x^n$.

 \item \label{c} $T^2 = \displaystyle \sum_{n=2}^{K+1} (n-1)\,y^n + \sum_{n=K+2}^{2K} (2K - n+1)\, y^n.$

 \item \label{d} $T^3 \geq \displaystyle \dfrac{1}{2}\, \sum_{n=3}^{K+2} (n-1)(n-2)\,y^n + \dfrac{1}{2} \sum_{n=K+3}^{2K+1} \big[(n-K-2)(3K-n+1)+(n-1)(2K-n+2)\big]\,y^n$.

\item \label{e} $T^4 \geq \dfrac{1}{6}\, \displaystyle \sum_{n=4}^{K+3} (n-1)(n-2)(n-3)\,y^n + \displaystyle \sum_{n={K+4}}^{2K+2} e_{K}(n)\,y^n,$ \\
where $e_K (n) = \dfrac{1}{6}\, \big[(2K-n+3)(n^2 + 2nK-3n-2K^2-6K+2) + 2(n-K-3)(n-K-2)(4K-n+1) \big]$.

 \item \label{f}$ T \displaystyle\sum_{n=1}^K n^2y^n  = \frac{1}{6}\, \displaystyle \sum _{n=2}^{K+1} n(n-1)(2n-1)\,y^n + \displaystyle \sum_{n=K+2}^{2K} f_{K}(n) \,y^n$, \\
 where $f_{K}(n)= \dfrac{1}{6}\, (2K-n+1)(2n^2- n(2K+1)+2K(K+1))$.
 
  \item \label{g} $ \left(\displaystyle \sum_{n=1}^K n\,y^n\right)^2= \dfrac{1}{6} \,\displaystyle \sum _{n=2}^{K+1} n(n-1)(n+1)\,y^n +\displaystyle \sum _{n=K+2}^{2K} g_K(n)\, y^n$, \\
  where $g_K(n) = \dfrac{1}{6}\,(2K-n+1)(n^2 + n(2K +1) -2K(K+1))$.\
\end{enumerate}
\end{lemma}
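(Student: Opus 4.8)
The unifying idea is that every line of Lemma \ref{poly} is an assertion about the coefficients of a product of the generating polynomials $S=\sum_{n=0}^N x^n$ and $T=\sum_{n=1}^K y^n$ (or of weighted variants such as $\sum_{n=1}^K n^2 y^n$). Extracting the coefficient of $x^n$ (resp.\ $y^n$) in such a product is a discrete convolution, hence equals a (weighted) count of the integer tuples in the prescribed ranges that sum to $n$. The plan is therefore to compute each coefficient by this convolution/counting and to read off the stated closed forms, treating the exact identities and the inequalities by slightly different bookkeeping.

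For the identities (a), (c), (f) and (g) I would argue directly. In (a), the coefficient of $x^n$ in $S^2$ is the number of pairs $(i,j)$ with $0\le i,j\le N$ and $i+j=n$; this is $n+1$ for $0\le n\le N$ and $2N-n+1$ for $N+1\le n\le 2N$, which is exactly the two-piece formula. Part (c) is identical for $T^2$ with the index range shifted to $[1,K]$. For (f) and (g) the products are $\big(\sum_{n=1}^K y^n\big)\big(\sum_{n=1}^K n^2 y^n\big)$ and $\big(\sum_{n=1}^K n y^n\big)^2$; collecting the coefficient of $y^n$ amounts to summing $j^2$ (resp.\ $j(n-j)$) over the admissible range of $j$, and the explicit polynomials $f_K(n)$ and $g_K(n)$ are just the closed forms of these finite power sums (via $\sum j$ and $\sum j^2$) evaluated on the middle range $K+2\le n\le 2K$.

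The three inequalities (b), (d) and (e) rest on one observation: $S^3$, $T^3$ and $T^4$ are polynomials of degree $3N$, $3K$ and $4K$ respectively, whereas each right-hand side retains only terms up to degree $2N$, $2K+1$ and $2K+2$. Since $x,y>0$ all coefficients are non-negative, so discarding the high-degree tail produces a valid lower bound, and it then remains to check that the retained coefficients are reproduced exactly. On the low range the count is an unrestricted composition number --- e.g.\ the coefficient of $x^n$ in $S^3$ for $0\le n\le N$ is $\binom{n+2}{2}=\tfrac12(n+1)(n+2)$ --- while on the displayed middle range one applies inclusion--exclusion according to how many of the three (resp.\ four) summands exceed the cap $N$ (resp.\ $K$). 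The ranges are chosen precisely so that at most one summand can overshoot, so only the first-order correction survives: for instance the coefficient of $x^n$ in $S^3$ on $N+1\le n\le 2N$ equals $\binom{n+2}{2}-3\binom{n-N+1}{2}$, and a direct expansion turns this into $\tfrac12\big[(3N-n+1)(n-N)+(n+2)(2N-n+1)\big]$, matching (b). The same scheme gives (d), and, with the count $\binom{n-1}{3}-4\binom{n-K-1}{3}$, the coefficient $e_K(n)$ in (e).

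The main obstacle is purely the bookkeeping in the inequalities: verifying the range endpoints so that exactly the single-overshoot correction (and no double correction) is active, and then algebraically reconciling the binomial-difference counts with the explicit polynomials $e_K(n)$, $f_K(n)$, $g_K(n)$ --- the $T^4$ computation being the most laborious. I would organize the verification degree-range by degree-range and expand each correction term once, which reduces every part to a routine identity between quadratics (or cubics) in $n$.
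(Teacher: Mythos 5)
Your proposal is correct, and for the identities (a), (c), (f), (g) it coincides with the paper's proof: extract the coefficient by convolution and evaluate the resulting power sums $\sum j$, $\sum j^2$, $\sum j(n-j)$ on the two index ranges. For the inequalities (b), (d), (e) your route is genuinely different in mechanism. The paper builds the lower bounds by iterated multiplication: it writes $S^3 = S\cdot S^2$ and $S^4 \geq \left(\sum_{n\le N}(n+1)x^n\right)^2 + 2\sum_{n\le N}(n+1)x^n\sum_{n>N}(2N-n+1)x^n$ (discarding the square of the high part), and then drops further non-negative convolution pieces mid-computation before evaluating the surviving sums; parts (d) and (e) are then obtained from (b) and the $S^4$ bound by the shift $T = y\sum_{n=0}^{K-1}y^n$ with $N=K-1$. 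You instead compute the exact coefficients directly as bounded-composition counts via inclusion--exclusion, e.g.\ $\binom{n+2}{2}-3\binom{n-N+1}{2}$ for $S^3$ and $\binom{n-1}{3}-4\binom{n-K-1}{3}$ for $T^4$, observing that on the retained ranges at most one summand can exceed the cap (two overshoots would force $n\geq 2N+2$, resp.\ $n\geq 2K+4$), and truncate the non-negative tail only at the end; I checked that your binomial differences do reproduce the stated polynomials (e.g.\ both give $16$ at $K=3$, $n=7$ in (e)). Your version buys a structural insight the paper's derivation obscures: the stated lower bounds are precisely the degree-truncations of $S^3$, $T^3$, $T^4$ with \emph{exact} coefficients on the displayed middle ranges, so nothing is lost there. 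What the paper's version buys is that it never needs the inclusion--exclusion formula for bounded compositions, staying entirely within elementary power-sum evaluations of the convolutions already set up in (a) and (c). Either way the remaining work is the routine polynomial reconciliation you describe, so your plan would go through as stated.
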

\vskip2mm
\begin{proof}[Proof of inequality (\ref{I})] By (\ref{a}) in Lemma \ref{poly},
\begin{align*}
    S^4 = (S^2)^2 \geq \left(\sum_{n=0}^N (n+1)\,x^n\right)^2 \geq \left(\displaystyle \sum_{n=1}^N n\,x^n \right)^2 + \left(\displaystyle \sum_{n=0}^N x^n \right)^2 + \sum_{n=0}^N x^n \sum_{n=1}^N n\,x^n.
\end{align*}
Therefore,
 \begin{align*}
 S^4 - S^2 - S \sum_{n=1}^N n^2\,x^n + \left( \sum_{n=1}^N n\,x^n\right)^2 & \geq  2 \left(\displaystyle \sum_{n=1}^N n\,x^n \right)^2 +  \sum_{n=0}^N x^n \sum_{n=1}^N n\,x^n - \sum_{n=0}^N x^n\sum_{n=1}^N n^2\,x^n \\
 & = 2 \left(\displaystyle \sum_{n=1}^N n\,x^n \right)^2  + \sum_{n=0}^N x^n \left(\sum_{n=1}^N n(1-n)\,x^n\right) \\
 & = \sum_{n=0}^{2N} c_n\, x^n,
  \end{align*}
where $c_n = \displaystyle \sum_{i+j=n} 2ij + i (1-i)$. Note that $n$ ranges from $0$ to $2N$ while $0\leq i,j \leq N$. We conclude since,
\begin{equation*}
c_n= \begin{cases}
\displaystyle \sum_{i=0}^n 2i(n-i)+ i(1-i) = 0, & 0\leq n \leq N\\
\displaystyle \sum_{i=n-N}^N 2i(n-i)+ i(1-i) = (N+1)(n-N)(2N+1-n), & N<n\leq 2N
\end{cases}
\end{equation*}
\end{proof}
  
\begin{proof}[Proof of inequality (\ref{II})]
By (\ref{b}) in Lemma \ref{poly}
\begin{equation}
\begin{aligned}
  2S^3\,T - T\sum_{n=1}^N n^2\,x^n- 2S\,T &=  T \left(2S^3 - \sum_{n=1}^N n^2\,x^n -2S \right)\\
  &\geq  T \left(\sum_{n=0}^N (n+1)(n+2)\,x^n - \sum_{n=1}^N n^2\,x^n - 2 \sum_{n=0}^N x^n\right)\\
    &= 3 T \sum_{n=1}^N n\,x^n.
\end{aligned}
\end{equation}
Together with (6), we can easily verify the desired inequality for $K=1$. Assume $K\geq 2$. By (\ref{a}) and (\ref{c}) in Lemma \ref{poly},
\begin{equation}
\begin{aligned}
 3S^2\,T^2 - 2\sum_{n=1}^N n\,x^n \,\sum_{n=1}^K n\,y^n & \geq 3\sum_{n=0}^N (n+1)\,x^n \sum_{m=2}^{K+1} (m-1)\,y^m   - 2\sum_{n=1}^N n\,x^n \,\sum_{m=1}^K m\,y^m\\ 
 &> \sum_{{0\leq n\leq N} \atop {2 \leq m \leq K}} \left(3(n+1)(m-1) -2nm \right) \,x^n\,y^m -2y \sum_{n=1}^N n\,x^n \\
& > -3\sum_{{0\leq n\leq N} \atop {2 \leq m \leq K}}n\,x^n\,y^m -2y \sum_{n=1}^N n\,x^n\\
&= -3 \sum_{m=2}^K y^m \sum_{n=1}^N n\,x^n -2y \sum_{n=1}^N n\,x^n > -3T \sum_{n=1}^N n\,x^n.
\end{aligned}
\end{equation}
We conclude by combining (6) and (7).

\end{proof}

\begin{proof}[Proof of inequality (\ref{III})]
Since $S\geq 1,$
\begin{align*}
2S\,T^3 - S\sum_{n=1}^K n^2y^n + 3S^2\,T^2 + S^3\,T \geq S \left(2T^3-\sum_{n=1}^K n^2\,y^n + 3T^2 + T\right).
 \end{align*}
It is enough to show $2T^3-\sum_{n=1}^K n^2\,y^n + 3T^2 + T>0$. This is immediate when $K=1,2$. Suppose $K\geq 3$. By (\ref{c}) and (\ref{d}) in Lemma \ref{poly},
\begin{align*}
2T^3-\sum_{n=1}^K n^2y^n + 3T^2 + T &\geq \sum_{n=3}^{K+2} (n-1)(n-2)\,y^n - \sum_{n=1}^K n^2\,y^n  + 3\sum_{n=2}^{K+1} (n-1)\,y^n + \sum_{n=1}^K y^n\\
 &> \sum_{n=3}^K \left( (n-1)(n-2)-n^2 + 3(n-1)+1 \right)\,y^n=0.
 \end{align*}
\end{proof}   
    
\begin{proof}[Proof of inequality (\ref{IV})] Note that
\begin{align*}
T^4 - T^2 + 2S\,T^3 &+ S^3\,T - T\sum_{n=1}^K n^2y^n +  \left( \sum_{n=1}^K n\,y^n\right)^2 \\  
& \geq T^4 - T^2 + 2T^3 + T - T\sum_{n=1}^K n^2\,y^n + \left( \sum_{n=1}^K n\,y^n\right)^2.
\end{align*}
We will show 
\begin{align*}
    P_K(y) \coloneqq T^4 - T^2 + 2T^3 + T - T\sum_{n=1}^K n^2\,y^n + \left( \sum_{n=1}^K n\,y^n\right)^2\geq 0.
\end{align*}

The cases $K=1,2$ can be checked directly. Let us assume $K\geq 3$. $P_K(y)$ is a polynomial of degree $4K$, say of the form $\sum_{n=0}^{4K} d_n y^n$. We will show $d_n\geq 0$ for $0\leq n \leq 4K$. Due to $T^4$ and $T^3$ terms, necessarily $d_n>0$ for $2K+1 \leq n \leq 4K$. It remains to show $d_n \geq 0$ for $0\leq n \leq 2K$. First, observe the following: $d_0=0$, $d_1=1$, and $d_2=d_3=0$. In order to treat the remaining cases, we apply (\ref{c})-(\ref{g}) in Lemma \ref{poly} to $d_n.$ \\\\
\textit{When $4\leq n \leq K$}: 
$d_n \geq \frac{(n-1)(n-2)(n-3)}{6}\, - \,(n-1)\, + \,(n-1)(n-2)\, + 1 \,-\, \,\frac{n(n-1)(2n-1)}{6} + \frac{n(n-1)(n+1)}{6} =\frac{(n-2)(n-3)}{2}>0$.\\\\
\textit{When $n= K+1$}: $d_n\geq \frac{K(K-1)(K-2)}{6}\,-\,K\,+\,K(K-1)\,-\, \frac{K(K+1)(2K+1)}{6}+ \,\frac{K(K+1)(K+2)}{6} = \frac{K(K-3)}{2}\geq 0$.\\\\
\textit{When $n= K+2$}:
$d_n \geq \frac{K(K-1)(K+1)}{6} - \,(K-1)\, + \,K(K+1)\, - \frac{(K-1)(2K^2+5K+6)}{6}\,+ \,\frac{(K-1)(K+1)(K+6)}{6} = \frac{3K^2-K+2}{2}>0$.\\\\
\textit{When $n= K+3$}:
$d_n \geq \frac{K(K+1)(K+2)}{6}\, - \,(K-2) \,+\, (K-1)(K+4)\,- \frac{(K-2)(2K^2 +7K+15)}{6} +\frac{(K-2)(K^2 +11K+12)}{6} = \frac{5K^2+K-2}{2}>0$.\\\\
\textit{When $K+4\leq n \leq 2K$}: After applying Lemma \ref{poly}, simplifying and rearranging the terms, we get

\begin{align*}
    d_n \geq -\frac{1}{3}n^3 + \left(K+ \frac{1}{2} \right)n^2 +\frac{11}{6}n -\frac{2K^3}{3}-K^2 - \frac{13K}{3}  -2.
\end{align*}
Let \begin{equation*}
    V_K(n) = -\frac{1}{3}n^3 + \left(K+ \frac{1}{2} \right)n^2 +\frac{11}{6}n -\frac{2K^3}{3}-K^2 - \frac{13K}{3}  -2.
\end{equation*}
We will show that $V_K \geq 0$ for $K+4\leq n \leq 2K$. Taking the derivative of $V_K$ w.r.t $n$ yields
\begin{equation*}
    \dfrac{dV_K}{dn} = - n^2 + n (2K+1) + \dfrac{11}{6}.
\end{equation*}
$\dfrac{dV_K}{dn}\geq 0$ iff  \begin{equation*}
\dfrac{1}{6} \left(-\sqrt{3} \sqrt{12K^2 + 12K + 25}+6K +3\right) \leq n \leq \dfrac{1}{6} \left(\sqrt{3} \sqrt{12K^2 + 12K + 25}+6K +3\right). \end{equation*}
But, 
\begin{equation*}
    \left[\dfrac{1}{6} \left(-\sqrt{3} \sqrt{12K^2 + 12K + 25}+6K +3\right), \dfrac{1}{6} \left(\sqrt{3} \sqrt{12K^2 + 12K + 25}+6K +3\right) \right] \supseteq [K,2K]
\end{equation*}
for all $K\geq 0.$\ \vskip1mm
Therefore $\dfrac{dV_K}{dn}\geq 0$ for all $K+4\leq n \leq 2K$. Moreover, $V_K(K+4) = \frac{K(7K+3)}{2}-8>0$ for all $K\geq 3$, completing the proof.

\end{proof}

\begin{appendix}
    \section*{Appendix: Some polynomial identities}
Each identity in Lemma \ref{poly} can be obtained by the standard product of polynomials. The desired coefficients are given by convolution operation.
\begin{proof}[Proof of Lemma \ref{poly}]\
\label{polynomial identities}
 \begin{enumerate}[(a)]
\item \begin{align*}
S^2  & =  \sum_{n=0}^{2N}\left(\sum_{{i+j = n}\atop {0\leq i,j\leq N}} 1\right)x^n \\
 & = \sum_{n=0}^N \left(\sum_{i=0}^n 1\right)x^n + \sum_{n=N+1}^{2N} \left(\sum_{i=n-N}^{N} 1 \right)x^n\\
 &= \sum_{n=0}^N (n+1)\, x^n + \sum_{n=N+1}^{2N} (2N-n+1)\,x^n.
\end{align*}

\item \begin{align*}
S^3=S \cdot S^2= \sum_{n=0}^N x^n \left( \sum_{n=0}^N (n+1)x^n + \sum_{n=N+1}^{2N} (2N-n+1)x^n\right).
\end{align*}
Let us consider the first summation.
\begin{align*}
 \sum_{n=0}^N x^n  \sum_{n=0}^N (n+1)\,x^n &= \sum_{n=0}^{2N} \left(\sum_{{i+j = n}\atop {0\leq i,j\leq N}}(i+1)\right)x^n\\
& = \sum_{n=0}^N \left(\sum_{i=0}^n (i+1) \right)x^n + \sum_{n=N+1}^{2N} \left(\sum_{i=n-N}^N (i+1) \right)x^n\\
& = \frac{1}{2}\, \sum_{n=0}^N (n+1)(n+2)\,x^n + \frac{1}{2} \sum_{n=N+1}^{2N} (n+2)(2N-n+1)\, x^n. 
\end{align*}
Similarly, for the second summation
\begin{align*}
 \sum_{n=0}^N x^n  \sum_{n=N+1}^{2N} (2N- n+1)\,x^n &= x^{N+1} \sum_{n=0}^N x^n \sum_{n=0}^{N-1} (N-n)\,x^n \\
 & \geq x^{N+1} \sum_{n=0}^{N-1} x^n \sum_{n=0}^{N-1} (N-n)\,x^n   \\
  & \geq  x^{N+1} \sum_{n=0}^{2N-2} \left(\sum_{{i+j = n}\atop {0\leq i,j\leq N-1}} (N-i)\right)x^n \\
&\geq x^{N+1} \displaystyle \sum_{n=0}^{N-1} \left( \sum_{i=0}^n (N-i)\right) x^n\\
&= x^{N+1} \cdot \frac{1}{2}\sum_{n=0}^{N-1} (n+1)(2N-n)\,x^n\\
&= \frac{1}{2}\, \sum_{n=N+1}^{2N} (n-N)(3N-n+1)\,x^n.
\end{align*}
We conclude by combining the two terms.
\item Since $T^2 = y^2 \left(\displaystyle \sum_{n=0}^{K-1} y^n\right)^2$, apply (a) with $N=K-1$ and $x=y$.

\item Since $T^3 = y^3 \left(\displaystyle \sum_{n=0}^{K-1} y^n\right)^2$, apply (b) with $N=K-1$ and $x=y$.
\item First, we consider $S^4$. By (a), 
\begin{align*}
    S^4 \geq \left(\sum_{n=0}^N (n+1)\,x^n\right)^2 + 2\,\sum_{n=0}^N (n+1)\,x^n \sum_{n=N+1}^{2N} (2N-n+1)\,x^n
\end{align*}
Consider the two summations on the right-hand side separately.
\begin{align*}
\left( \sum_{n=0}^N (n+1)\,x^n\right)^2 &=\sum_{n=0}^{2N} \left(\sum_{{i+j = n}\atop {0\leq i,j\leq N}}(i+1)(j+1)\right)x^n \\ 
&= \sum_{n=0}^{N} \left(\sum_{i=0}^n (i+1)(n-i+1)\right)x^n + \sum_{n=N+1}^{2N} \left(\sum_{i=n-N}^N (i+1)(n-i+1)\right)x^n\\
&= \frac{1}{6}\, \sum_{n=0}^{N} (n+1)(n+2)(n+3)\,x^n  \\ & + \frac{1}{6} \sum_{n=N+1}^{2N} \big[(2N-n+1)(n^2 + n(2N+7) - 2(N^2 + N-3)) \big]\,x^n.
\end{align*}
By proceeding as before, for the second summation, we have
\begin{align*}
\sum_{n=0}^N (n+1)\,x^n \sum_{n=N+1}^{2N} (2N-n+1)\,x^n &= x^{N+1} \sum_{n=0}^N (n+1)\,x^n \sum_{n=0}^{N-1}(N-n) \,x^n  \\
& \geq   x^{N+1} \sum_{n=0}^{N-1} (n+1)\,x^n \sum_{n=0}^{N-1}(N-n)\, x^n  \\
&= x^{N+1} \sum_{n=0}^{2N-2} \left(\sum_{{i+j = n}\atop {0\leq i,j\leq N-1}}(N-i)(j+1)\right)\,x^n\\
&\geq x^{N+1}\sum_{n=0}^{N-1} \left(\sum_{i=0}^n(N-i)(n-i+1)\right)x^n\\
&= x^{N+1} \cdot \frac{1}{6} \sum_{n=0}^{N-1} (n+1)(n+2)(3N-n)\, x^n\\
&= \frac{1}{6} \sum_{n=N+1}^{2N} (n-N)(n-N+1)(4N-n+1)\,x^n.
\end{align*}
After combining the two identities, we get
\begin{equation*}
  S^4  \geq \frac{1}{6} \,\displaystyle \sum_{n=0}^{N} (n+1)(n+2)(n+3)\,x^n + \displaystyle \sum_{n=N+1}^{2N} \Tilde{e}_K (n)\, x^n,
  \end{equation*}
  
where $\Tilde{e}_K (n) = \dfrac{1}{6}\, \big[(2N-n+1)(n^2 + n(2N+7) - 2(N^2 + N-3)) + 2(n-N)(n-N+1)(4N-n+1) \big]$.\\\\
Since $T^4 = y^4 \left(\displaystyle \sum_{n=0}^{K-1} y^n \right)^4,$ by letting $N= K-1$ and $x=y$ in $S^4$, we can deduce the desired inequality.
\item 
\begin{align*}
T \displaystyle \sum_{n=1}^K n^2y^n = \displaystyle \sum_{n=1}^K y^n \displaystyle \sum_{n=1}^K n^2\,y^n &= \sum_{n=2}^{2K} \left(\sum_{{i+j = n}\atop {0\leq i,j\leq K}} i^2 \right)\,y^n\\
&= \sum_{n=2}^{K+1} \left(\sum_{i=1}^{n-1} i^2\right)\,y^n + \sum_{n=K+2}^{2K} \left(\sum_{i=n-K}^{K}i^2\right)\,y^n \\
& = \frac{1}{6} \sum _{n=2}^{K+1} n(n-1)(2n-1)\,y^n \\ &+ \frac{1}{6}\sum_{n=K+2}^{2K} (2K-n+1)(2n^2- n(2K+1)+2K(K+1))\,y^n.
\end{align*}
\item 
\begin{align*}
    \left(\sum_{n=1}^K n\,y^n\right)^2  & = \sum_{n=2}^{2K} \left(\sum_{{i+j = n}\atop {0\leq i,j\leq K}}ij\right)\,y^n \\
   &= \sum_{n=2}^{K+1} \left( \sum_{i=1}^{n-1} i(n-i)\right)\,y^n + \sum_{n=K+2}^{2K} \left( \sum_{i=n-K}^{K} i(n-i)\right)\,y^n\\
   & =  \frac{1}{6}\, \sum_{n=2}^{K+1} n(n-1)(n+1)\,y^n \\ 
   & + \frac{1}{6} \sum_{n=K+2}^{2K} (2K-n+1)(n^2 + n(2K+1) -2K(K+1))\, y^n.
\end{align*}

\end{enumerate}
\end{proof}

\end{appendix}


\vskip2cm


\noindent Heshan Aravinda \\
Department of Mathematics \\
University of Florida \\
Gainesville, FL 32611, USA \\
heshanaravinda.p@ufl.edu
\end{document}